\newtheorem{theorem}{Theorem}
\newtheorem{lemma}{Lemma}
\newtheorem{proposition}{Proposition}
\newtheorem{corollary}{Corollary}
\theoremstyle{remark}
\newtheorem{remark}{Remark}
\numberwithin{equation}{section}
\title[Explicit Gaps Between Squarefree Integers]{Explicit Bounds for Large Gaps Between Squarefree Integers}
\author[A. Kumchev]{Angel Kumchev}
\address{Department of Mathematics\\ Towson University\\ Towson, MD 21252\\ U.S.A.}
\email{akumchev@towson.edu}
\author[W. McCormick]{Wade McCormick}
\address{Department of Mathematics\\ University of California Berkeley\\ Berkeley, CA 94720\\ U.S.A.}
\email{wademc@berkeley.edu}
\author[N. McNew]{Nathan McNew}
\address{Department of Mathematics\\ Towson University\\ Towson, MD 21252\\ U.S.A.}
\email{nmcnew@towson.edu}
\author[A. Park]{Ariana Park}
\address{Department of Mathematics\\ Massachusetts Institute of Technology\\ Cambridge, MA 02139\\ U.S.A.}
\email{arianap@mit.edu}
\author[R. Scherr]{Russell Scherr}
\address{Department of Mathematics\\ Towson University\\ Towson, MD 21252\\ U.S.A.}
\email{rscher4@students.towson.edu}
\author[W. Ziehr]{Willow Ziehr}
\address{Department of Mathematics\\ University of California Berkeley\\ Berkeley, CA 94720\\ U.S.A.}
\email{simon\_ziehr@berkeley.edu}
\begin{document}

\begin{abstract}  We obtain explicit forms of the current best known asymptotic upper bounds for gaps between squarefree integers.  In particular we show, for any $x \ge 2$, that every interval of the form $(x, x + 11x^{1/5}\log x]$ contains a squarefree integer. The constant 11 can be improved further, if $x$ is assumed to be larger than a (very) large constant.
\end{abstract}

\maketitle
\section{Introduction}

An integer $n$ is called {\em squarefree} if it is not divisible by the square of any prime $p$. More generally, if $k \ge 2$, $n$ is called {\em $k$-free} if it is not divisible by $p^k$ for any prime $p$; $3$-free integers, in particular, are also known as \emph{cubefree}.

The asymptotic distribution of the $k$-free integers has been studied systematically, at least since the early 1900s, with a special focus on the squarefree case. Let $Q_k(x)$ denote the counting function of the $k$-free numbers up to $x$, and consider the error term $E_k(x)$ in the asymptotic formula
\[ Q_k(x) = \frac {x}{\zeta(k)} + E_k(x), \]
where $\zeta(k)$ is the Riemann zeta-function. The bound $E_k(x) = O\left(x^{1/k}\right)$ is classical, and further improvements are closely related to the distribution of zeros of the zeta-function. In particular, the best known bound for $E_k(x)$,
\[ E_k(x) = O \left( x^{1/k}\exp\left(-c(k)(\log x)^{3/5}(\log\log x)^{-1/5}\right) \right), \]
follows from the work of Walfisz on the error term in the Prime Number Theorem (see \cite{Walf63}). Still, a number of authors \cite{BaPi85, BaPo10, Grah81, Jia93, HLiu14, HLiu16, MoVa81} have obtained sharper bounds under the assumption that the Riemann Hypothesis is true.

A related problem that has attracted considerable attention concerns the gaps between consecutive $k$-free integers. The first result in that direction was obtained by Fogels~\cite{fogel41}, who proved that if $\theta > 2/5$ the interval $( x,x+x^{\theta} ]$ contains a squarefree integer for all sufficiently large $x$. In 1951, Roth \cite{Roth51} reduced the exponent $2/5$ in Fogels's result to $3/13$, while Halberstam and Roth \cite{HR51} proved that the interval $(x, x+x^{\theta}]$ contains a $k$-free integer for any $\theta > 1/(2k)$ and for all sufficiently large $x$. Around the same time, Erd\H os~\cite{erdos51} proved that there exist infinitely many intervals $(x,x+h]$, with
\[ h \gg \frac {\log x}{\log\log x}, \]
which contain no squarefree integers. Together, these results inspired the conjecture that for any fixed $\varepsilon > 0$, the interval $(x, x+x^\varepsilon]$ contains a squarefree integer for sufficiently large $x$. This conjecture seems beyond the reach of current methods, though Granville \cite{Gran98} has shown that, like many other famous theorems and conjectures in number theory, it follows from the $abc$-conjecture of Masser and Oesterl\'e.

Initially, further improvements on Roth's result \cite{Roth51} on gaps between squarefree numbers were obtained through the method of exponential sums \cite{gk88, rank55, richert54, Schm64}, while the (mostly elementary) work of Halberstam and Roth \cite{HR51} inspired research on the distribution of $k$-free numbers in polynomial sequences: see \cite{Hool67, HN80, nair79} for some early work and \cite[\S2]{FGT15} for a more detailed history. Starting in the late 1980s, Filaseta and Trifonov published a series of papers~\cite{Fila88, Fila90, FT89, FT92, FT96, tr89, Tr95}, where they developed an elementary proof \cite{FT92} that there exists a constant $c > 0$ such that the interval $(x, x+ cx^{1/5}\log x]$ contains a squarefree integer for all sufficiently large $x$. Later, Trifonov \cite{Tr95} generalized this result and proved that, for each $k \ge 3$, there exists a constant $c = c(k) > 0$ such that the interval $(x, x+ cx^{1/(2k+1)}\log x]$ contains a $k$-free integer for all sufficiently large $x$. Filaseta and Trifonov~\cite{FT96} generalized their method to achieve progress in other problems---see the survey article~\cite{FGT15} for the history of such developments, but sharper bounds on the gaps between $k$-free integers have remained elusive.

During the past couple of decades, number theorists' interest in numerically explicit results has increased significantly, and this has led to the development of numerically explicit versions of known theorems. As the Filaseta--Trifonov approach to gaps between $k$-free integers is both self-contained and ``numerically friendly,'' it therefore makes sense to investigate fully explicit versions of the results of \cite{FT92} and \cite{Tr95}. In this note, we prove such explicit versions of the gap results for squarefree integers. Our main theorem is as follows.

\begin{theorem}\label{thm2free1}
  For any $x \ge 2$, the interval $(x, x + 11x^{1/5}\log x]$ contains a squarefree integer.
\end{theorem}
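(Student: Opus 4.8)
The plan is to argue by contradiction. Suppose that for some $x \ge 2$ the interval $(x, x+H]$, with $H = 11\,x^{1/5}\log x$, contains no squarefree integer. Since $H > 1$, this interval contains at least $\lfloor H\rfloor \ge 1$ integers, each of which is then divisible by $p^2$ for some prime $p$. Fixing a parameter $y$ of size roughly $x^{1/5}$, I will bound the number of these non-squarefree integers by separating those lying in
\[
A = \{\, n \in (x, x+H] : p^2 \mid n \text{ for some prime } p \le y \,\}
\]
from those not in $A$, the latter necessarily being divisible by $q^2$ for some prime $q > y$; the goal is to show this total is less than $\lfloor H\rfloor$, which does not exceed the number of integers in $(x,x+H]$.

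For $A$ the count is elementary:
\[
|A| \le \sum_{p \le y}\Bigl(\frac{H}{p^2} + 1\Bigr) < H\sum_{p} p^{-2} + \pi(y) = P(2)\,H + \pi(y),
\qquad P(2) = \sum_{p} p^{-2} = 0.45224742\ldots,
\]
so, invoking an explicit upper bound for $\pi(y)$ of Rosser--Schoenfeld (or Dusart) type and keeping $y$ not much larger than $x^{1/5}$, the set $A$ accounts for fewer than about $0.46\,H$ of the integers, leaving a positive proportion of the interval still to be covered by squares of large primes.

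The substance of the argument is the bound for the large primes, and this is where the Filaseta--Trifonov method is needed. One must bound the number of $n \in (x, x+H]$ divisible by $q^2$ for some prime $q > y$; writing $n = q^2 m$ gives $m < (x+H)/y^2$, and for $x$ large we have $H < y^2$, so $q$ is determined by $n$. A trivial count --- one $n$ for each of the $\ll x/y^2$ admissible pairs $(m,q)$ --- is hopelessly weak, since the vast majority of values of $m$ support no $n$; the real content is that the smooth curve $t \mapsto \sqrt{x/t}$ (and its translate $t \mapsto \sqrt{(x+H)/t}$) has few integer points near it. This is established by the divided-difference, or determinant, technique going back to Roth and Halberstam--Roth and sharpened by Filaseta and Trifonov: after a dyadic decomposition of the range of $q$, one supposes that $r+1$ admissible values $m_1 < \cdots < m_{r+1}$ lie in a short subinterval, forms the $r$-th order divided difference of the relevant auxiliary function at these points, and exploits the tension between (i) its value being nonzero and of size governed by the non-vanishing $r$-th derivative, and (ii) its being, up to a controlled error, a rational number whose denominator divides $\prod_{1 \le i < j \le r+1}(m_j - m_i)$; this forces the subinterval to be long and hence caps the number of admissible $m$ it can contain. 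Summing over subintervals and over the $O(\log x)$ dyadic blocks --- with the order $r$ chosen appropriately, the resulting exponent $1/5$ fitting the pattern $1/(2k+1)$ for $k$-free integers --- produces a contribution of the shape $O(H/y) + (\text{error terms})$ that is $o(H)$, hence well below $(1 - P(2))H$.

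Combining the two parts, for every $x$ beyond some explicit threshold $X_0$ the interval $(x, x+H]$ contains fewer than $\lfloor H\rfloor$ non-squarefree integers, which is the contradiction sought; sharpening the choices of $y$, of the divided-difference orders, and of the subinterval lengths yields the smaller constants referred to in the abstract once $x$ exceeds a much larger threshold. The residual range $2 \le x < X_0$ is then disposed of by a direct computation confirming that every such interval contains a squarefree integer --- equivalently, that the gaps in the sequence of squarefree numbers below $X_0$ never exceed $11\,x^{1/5}\log x$. I expect the main obstacle to be twofold: first, converting the Filaseta--Trifonov lattice-point estimate into a clean explicit inequality with constants strong enough to close the count; and second, arranging matters so that $X_0$ is small enough for the finite check $2 \le x < X_0$ to be feasible. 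It is precisely the pull between these two constraints that pins the constant at $11$ rather than something smaller.
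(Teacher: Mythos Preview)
Your overall framework matches the paper's: count non-squarefree integers in the interval, split by the size of the prime-square divisor, handle small primes trivially, apply Filaseta--Trifonov for large primes, and check small $x$ computationally. But two genuine gaps would keep this outline from closing.

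First, the divided-difference argument as you describe it---taking $r+1$ generic points in a short subinterval and forming the $r$-th order divided difference---is essentially the Halberstam--Roth method, which only yields exponent $1/4$ for squarefrees, not $1/5$. The Filaseta--Trifonov improvement does not come from simply raising the order $r$; it rests on a specific \emph{pair-of-pairs} configuration. One studies quadruples $u,\,u+a,\,u+b,\,u+a+b$ in the exceptional set and combines a third-order divided-difference bound with a dichotomy lemma (Lemma~\ref{lem:9} in the paper) derived from the algebraic identity
\[
\frac{2u+3a}{(u+a)^2} - \frac{2u-a}{u^2} \;=\; \frac{a^3}{u^2(u+a)^2}
\]
for $f(u)=x/u^2$. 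It is this identity, not a higher derivative, that pushes the exponent from $1/4$ down to $1/5$; without it your argument would stall at $cx^{1/4}$.

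Second, the main $x^{1/5}$ argument in the paper is only made to work for $x \ge e^{116}$, whereas the available computation of Mossinghoff--Oliveira e Silva--Trudgian reaches only $x \le e^{41}$. The gap $e^{41} \le x \le e^{116}$ is not closed by ``a direct computation'' (that would be hopeless) but by proving auxiliary results with the weaker exponent $1/4$---Propositions~\ref{prop3} and~\ref{prop4} show that $(x,x+5x^{1/4}]$ and $(x,x+3.8x^{1/4}]$ contain squarefree integers---which beat $11x^{1/5}\log x$ throughout this range. Your outline does not anticipate this bridge. A smaller point: the paper also sharpens your small-prime count by sieving primes $p\le J$ via the product $\prod_{p\le J}(1-p^{-2})$ with inclusion--exclusion error $2^{\pi(J)}$, rather than the union bound $\sum_{p} p^{-2}$; this recovers roughly an extra $0.06H$ of room, which is needed to land the constant at~$11$.
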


The reader familiar with the work of Filaseta and Trifonov may wonder whether the techniques from this work can be extended to obtain similar results on gaps between $k$-free integers when $k \ge 3$. This is very much possible. Indeed, we have proved the following result on gaps between cubefree integers. Its proof and the proofs of companion results on $k$-free integers with $k \ge 4$ will appear in a forthcoming paper.

\begin{theorem}\label{thm3free1}
  For any $x \ge 2$, the interval $(x, x + 5x^{1/7}\log x]$ contains a cubefree integer.
\end{theorem}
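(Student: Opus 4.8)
The plan is to establish Theorem~\ref{thm3free1} by adapting the Filaseta--Trifonov machinery for cubefree integers, as developed in \cite{Tr95}, but tracking every constant explicitly. Suppose, for contradiction, that the interval $(x, x + 5x^{1/7}\log x]$ contains no cubefree integer. Then for every integer $n$ in this interval there is a prime $p$ with $p^3 \mid n$; write $n = p^3 m$. Set $H = 5x^{1/7}\log x$, so there are roughly $H$ such $n$. Split the primes into ranges: the ``small'' primes $p \le z$ for a threshold $z$ to be chosen near $x^{1/7}$, and the ``large'' primes $p > z$. For small $p$, the integers $n \in (x, x+H]$ divisible by $p^3$ number at most $H/p^3 + 1$, and summing $\sum_{p \le z}(H/p^3 + 1)$ gives a contribution that is a small fraction of $H$ once the constants are arranged; this is the easy, essentially elementary part. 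The crux is to show that the large primes $p > z$ cannot account for the remaining, positive proportion of the $n$'s.

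For the large primes I would use the Filaseta--Trifonov divided-difference / determinant argument. The key observation is that if $p^3 \mid n$ with $p$ large, then $m = n/p^3 \le (x+H)/z^3$ is small, so there are few possible values of $m$; for each fixed $m$ the solutions $n = p^3 m$ correspond to $p \in ((x/m)^{1/3}, ((x+H)/m)^{1/3}]$, a short interval whose integer cube-root lattice points we must bound. One replaces the count of such $p$ by a count of integer points $(a, q)$ with $q^3 m a^{?}$ close to an integer --- more precisely, one studies when consecutive such ``bad'' integers $n_1 < n_2 < \cdots < n_r$ (all in a dyadic-type subinterval and all with cube divisor coming from primes in a dyadic block $p \sim P$) are spaced closely. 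If $r$ of them lie within a window of length comparable to $H$, one forms the Vandermonde-like determinant built from the vectors $(1, n_i, n_i^2, \ldots)$ together with the congruence data $p_i^3 \mid n_i$, shows this determinant is a nonzero integer (hence $\ge 1$ in absolute value) but also, via mean-value estimates on the relevant polynomials and the smallness of the gaps $n_{i+1} - n_i$, that it is $< 1$ unless $P$ is large, forcing $P \gg x^{1/7}/(\text{something})$; combined with $p^3 \le x + H$ this pins $P$ into a narrow range and bounds the total count. Choosing the determinant size $r$ (here $r = 3$ or $4$ should suffice for the exponent $1/(2k+1) = 1/7$) and optimizing the dyadic thresholds against $H = 5x^{1/7}\log x$ yields the contradiction.

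The main obstacle, as in the squarefree case of Theorem~\ref{thm2free1}, will be making the large-prime determinant estimate fully explicit: one must carry exact constants through the mean-value bounds for the derivatives of $(x/m)^{1/3}$-type functions on the short intervals, through the triangle inequalities bounding the determinant, and through the optimization of all the dyadic cut-off parameters, all while keeping the final constant down to $5$. This requires care at the low end of the prime range (primes just above $z$), where the ``few values of $m$'' heuristic is weakest and where one may need a secondary elementary sieve to mop up, and at the boundary between the elementary small-prime range and the determinant large-prime range, which must be chosen so the two error terms balance. Since the detailed execution parallels --- but is more intricate than --- the squarefree argument already carried out in this paper, and since the companion $k \ge 4$ cases introduce further bookkeeping, I would defer the complete proof to the forthcoming paper, as the authors indicate.
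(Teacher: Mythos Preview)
The paper does not actually prove Theorem~\ref{thm3free1}: it is stated as an announcement, with the remark that ``Its proof and the proofs of companion results on $k$-free integers with $k \ge 4$ will appear in a forthcoming paper.'' Your proposal is likewise not a proof but a plan that you also explicitly defer at the end, so in that trivial sense it matches the paper. There is therefore no proof in the paper to compare against.

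That said, as a sketch of the intended method your description drifts away from the actual Filaseta--Trifonov machinery used here. The argument does not fix $m=n/p^3$ and look for $p$ in an interval of the form $((x/m)^{1/3},((x+H)/m)^{1/3}]$, nor does it build Vandermonde-type determinants from vectors $(1,n_i,n_i^2,\dots)$ or use mean-value bounds for $(x/m)^{1/3}$. Rather, as in Sections~\ref{sec:2}--\ref{sec:5} of this paper, one bounds the set $S(M,N)$ of integers $u$ (not necessarily prime) in $(M,N]$ with $\{x/u^3\}\ge 1-h/u^3$, by applying divided differences of $f(u)=xu^{-3}$ at two, three, and four nearby elements of $S(M)$ (compare Lemmas~\ref{lem:2}, \ref{lemma4}, and~\ref{lem:9}). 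The ``nonzero integer hence $\ge 1$'' step applies to the numerator of the divided difference $f[u_1,\dots,u_s]$, and the mean-value input is $f^{(s)}(\xi)/s!$ for $\xi\in(M,\lambda M]$; the cubefree exponent $1/7$ then emerges from the same ``pairs of pairs'' argument (Section~4 analogue) combined with Lemma~\ref{lemma-1}. If you intend to carry this out, that is the template to follow, not the Vandermonde/fixed-$m$ framing you describe.
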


The focus of the above theorems is on providing explicit intervals that work for all $x$. The price we pay for this universality are the somewhat elevated values of the constants $11$ and $5$ in the theorems. If one is interested in reducing those constants further and willing to accept a result that holds only for sufficiently large $x$, then one may prefer versions like those given in the next theorem.

\begin{theorem}\label{thm2free2}
Every interval 
\begin{itemize}
\item $(x,x+5x^{1/5}\log x]$ contains a squarefree number for  $x \ge e^{400}$; 
\item $(x,x+2x^{1/5}\log x]$ contains a squarefree number for  $x \ge e^{1800}$; 
\item $(x,x+x^{1/5}\log x]$ contains a squarefree number for  $x \ge e^{500\,000}$.
\end{itemize}
\end{theorem}

Mossinghoff, Oliveira e Silva and Trudgian \cite{MOT21} (see also Marmet \cite{marm12}) investigated long gaps between squarefree numbers numerically.  Their computational work establishes the size of the longest gaps up to $10^{18}$, which are all dramatically smaller than the bounds that we get in this paper.  The largest gap that they find is a string of 18 consecutive non-squarefree numbers, the first of which is $125\,781\,000\,834\,058\,568$.  As a result of their work, we can assume $x\geq 10^{18}> e^{41}$ throughout the rest of this paper.

Theorem \ref{thm2free2} already hints that the constants in Theorems \ref{thm2free1} and \ref{thm3free1} are influenced by the ``small'' values of $x$. Indeed, we establish Theorem \ref{thm2free1} for $x \ge e^{116}$. To bridge the gap between this lower bound and $e^{41}$, we prove several propositions giving results with larger exponents, which are however superior to the results of the main theorem for small $x$. In particular, we find that the interval $(x,x+5x^{1/4}]$ always contains a squarefree integer (Proposition~\ref{prop3}) and the interval $(x,x+3.8x^{1/4}]$ contains a squarefree integer for $x\ge e^{109}$ (Proposition~\ref{prop4}). 

It should be clear by now from the above discussion, that the values of the constants and the various cutoffs in the theorems (and in Propositions~\ref{prop3} and \ref{prop4}) are not exact, but rather ``nice'' approximations. We say more about this in Section~\ref{sec:8}.\footnote{The interested reader can explore these phenomena further using the {\tt SageMath} code for the computational part of our work, which is available at \url{https://github.com/agreatnate/explicit-k-free-integer-bounds}}

\subsection*{Notation} Throughout the paper, for a real number $\theta$, we use $\lfloor \theta \rfloor$ to denote the greatest integer less than or equal to~$\theta$; also, $\{ \theta \} = \theta - \lfloor \theta \rfloor$. We write $|A|$ for the size of the set $A$, and $\pi(x)$ for the prime counting function. 

\section{Preliminaries}
\label{sec:2}

\subsection{Outline of the method}

Let $N(x,h)$ be the number of integers in $(x,x+h]$ that are not squarefree. Clearly, to prove any of our theorems, it suffices to show that $N(x,h)<h-1$ for the respective choices of $x$ and $h$. We first sieve this interval of the squares of very small primes, up to a parameter $J$ to be chosen later.  The number of integers in $(x,x+h]$ divisible by the square of a prime up to $J$ is at most
\begin{align*}
  h & \left(1-\prod_{p\leq J}\left(1-\frac{1}{p^2}\right)\right)+2^{\pi(J)}          = h\left(1-\prod_{p\leq J}\left(1-\frac{1}{p^2}\right)+\frac{2^{\pi(J)}}{h}\right) \eqqcolon h\sigma'_0(h,J).
\end{align*}
We then count separately the integers divisible by $p^2$ for each prime $p>J$. We find that
\begin{equation}\label{eqn2.1}
  N(x,h) \leq h\sigma_0'(h,J) + \sum_{p > J} \left( \left\lfloor \frac{x+h}{p^2} \right\rfloor - \left\lfloor \frac{x}{p^2} \right\rfloor \right),
\end{equation}
where the sum on the right is over all primes greater than $J$. To bound the latter sum, we study separately the contributions of ``small'' and ``large'' primes $p$. We introduce a parameter $H$, which we will later choose as $H = mh$, with $m \ge 1$ of moderate size, and we use this parameter to split the sum in \eqref{eqn2.1} as follows:
\begin{equation}\label{eqn2.2}
  \left( \sum_{J < p \le H} + \sum_{p > H} \right) \left( \left\lfloor \frac{x+h}{p^2} \right\rfloor - \left\lfloor \frac{x}{p^2} \right\rfloor \right) \eqqcolon \Sigma_1 + \Sigma_2.
\end{equation}

The contribution of the small primes can be bounded easily. We have
\begin{align}\label{eq:moderatePrimes}
  \Sigma_1 & \leq \sum_{J < p \leq H} \left( \frac {h}{p^2} + 1 \right) \leq h \sum_{p > J} \frac {1}{p^2} + \pi(H) \notag \\
            & < h\left(\sigma_1 - \sum_{p \leq J} \frac{1}{p^2} \right) +\pi(H),
\end{align}
where $\sigma_1$, the sum of the reciprocals of all squares of primes, satisfies
\newcommand{\sigmaOnea}{0.4523}
\begin{equation}\label{eqn2.4}
  \sigma_1 < \sigmaOnea.
\end{equation}
We group the sum over primes up to $J$ appearing in \eqref{eq:moderatePrimes} with $\sigma'_0(h,J)$ to write
\begin{equation}
  \sigma_0(h,J) = 1-\prod_{p\leq J}\left(1-\frac{1}{p^2}\right)-\sum_{p \leq J} \frac{1}{p^2}+\frac{2^{\pi(J)}}{h}, \label{eq:sigma0}
\end{equation} so that we get
\begin{equation}
  N(h,x) \leq h\big(\sigma_0(h,J) + \sigma_1 \big) + \pi(H) + \Sigma_2. \label{eq:Sig013}
\end{equation}
The term $\pi(H)$ above can be bounded with the help of the following well-known result of Rosser and Schoenfeld \cite[(3.2)]{RS62}.

\begin{lemma}\label{lemRS}
  For any $x > 1$, one has
  \begin{equation}\label{eq:RosSchoen}
    \pi(x) < \frac {x}{\log x} \left( 1 + \frac {1.5}{\log x} \right).
  \end{equation}
\end{lemma}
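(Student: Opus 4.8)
The plan is simply to cite it: inequality \eqref{eq:RosSchoen} is (3.2) in the classical memoir of Rosser and Schoenfeld \cite{RS62}, so rather than reprove it I would quote it verbatim. If one did want to reconstruct the argument, here is the route I would follow.

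I would start from an explicit Prime Number Theorem for the Chebyshev function $\psi(x) = \sum_{n \le x}\Lambda(n)$, that is, a bound $|\psi(x) - x| \le \varepsilon(x)\,x$ with $\varepsilon(x)$ explicit and decreasing. This comes from the explicit formula expressing $\psi(x)$ in terms of the nontrivial zeros $\rho$ of $\zeta(s)$, combined with (i) a numerical verification that the lowest-lying zeros sit on the critical line and (ii) an explicit zero-free region $\sigma > 1 - (R\log|t|)^{-1}$; estimating $\sum_\rho x^\rho/\rho$ from these two inputs is the core computation. Next I would pass to $\theta(x) = \sum_{p \le x}\log p$ via $\psi(x) - \theta(x) = \theta(x^{1/2}) + \theta(x^{1/3}) + \cdots = O(x^{1/2})$ (using a crude bound like $\theta(y) < 2y$), converting the $\psi$-estimate into $|\theta(x) - x| \le \varepsilon'(x)\,x$. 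Finally, partial summation gives
\[
  \pi(x) = \frac{\theta(x)}{\log x} + \int_2^x \frac{\theta(t)}{t \log^2 t}\,dt;
\]
inserting $\theta(t) \le t(1 + \varepsilon'(t))$ and integrating $\int_2^x(\log t)^{-2}\,dt$ by parts once more yields $\pi(x) < \frac{x}{\log x}\bigl(1 + \frac{c}{\log x} + (\text{lower order})\bigr)$, and choosing the cutoffs so that the lower-order terms are absorbed into $1.5/\log x$ gives \eqref{eq:RosSchoen} for all $x$ past some explicit $x_0$.

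The range $1 < x \le x_0$ I would then dispatch by direct computation: $\pi(x)$ jumps only at primes, so it suffices to check the inequality at each prime $p \le x_0$ and to note the behaviour as $x \to 1^+$. I expect the main obstacle to be the first step --- extracting a sufficiently sharp explicit bound on $\psi(x)$ from the available zero data and zero-free region --- which is an intricate numerical optimization rather than a conceptual difficulty. Since \cite{RS62} performs exactly this optimization, I would be content to cite \eqref{eq:RosSchoen} as stated and move on.
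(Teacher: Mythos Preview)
Your proposal is correct and matches the paper exactly: the paper does not prove this lemma but simply quotes it as (3.2) of Rosser and Schoenfeld \cite{RS62}, which is precisely what you do. Your additional sketch of how one would reconstruct the bound is accurate but unnecessary here.
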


Applying this lemma, we see that
\begin{equation}\label{eq:Sig2}
  \pi(H) < \sigma_2(h,m) h, \quad \sigma_2(h,m) \coloneqq \frac {m}{\log(mh)}\left( 1 + \frac {1.5}{\log (mh)} \right).
\end{equation}

The estimation of the sum $\Sigma_2$ occupies the remainder of the paper. We remark that primes $p > \sqrt{2x}$ do not contribute to that sum, since for such primes we have
\[ 0< \frac{x}{p^2}<\frac{x+h}{p^2}\leq\frac{2x}{p^2} < 1. \]
Moreover, if $p>h^{1/2}$, we get
\[ 0 \leq \left\lfloor \frac {x+h}{p^2} \right\rfloor - \left\lfloor \frac {x}{p^2} \right\rfloor\leq \frac {h}{p^2} + 1 < 2. \]
Thus, the finite sum $\Sigma_2$ counts the primes $p \in (H, \sqrt{2x}]$ for which
there exists an integer $m$ with
\[ \frac{x}{p^2} < m \leq \frac{x+h}{p^2}. \]
The latter inequality can be expressed in terms of the fractional part of $xp^{-2}$: it says that $\{ xp^{-2} \} > 1 - hp^{-2}$. Therefore,
\begin{equation}
  \Sigma_2 \leq \big| S(H, \sqrt{2x}) \big|, \label{eq:Sig3}
\end{equation}
where
\begin{equation} S(M,N) \coloneqq \left\{ u \in \mathbb Z : M< u \leq N, \; \gcd(u,2) = 1, \; 1 - \frac{h}{u^{2}} \le \left\{ \frac{x}{u^{2}} \right\} < 1 \right\}. \label{eq:Sdef} \end{equation}
We remark that while we no longer require the elements of $S(M,N)$ to be prime, we do restrict them to odd values so that the differences between any two elements of the set are even, a fact which will be useful later.

Thus, in view of \eqref{eq:Sig013}, \eqref{eq:Sig2}  and \eqref{eq:Sig3}, to prove any of our results, it will suffice to find a choice of $H$ such that
\begin{equation}\label{eqn2.9}
  \big| S(H, \sqrt{2x}) \big| \le h\sigma_3(h,m),
\end{equation}
for some bounded function $\sigma_3(h,m)$ such that
\begin{equation}\label{eqn2.10}
  \sigma_0(h,J) + \sigma_1 + \sigma_2(h,m) + \sigma_3(h,m) < 1 - \frac{1}{h}.
\end{equation}

In Section \ref{sec:6}, we establish inequalities of the form \eqref{eqn2.9} and optimize the choices of several parameters to ensure that the respective versions of \eqref{eqn2.10} hold. We conclude the present section with the statements of a couple of general-purpose lemmas, which we will use repeatedly in the remainder of the paper to obtain bounds on the spacing and cardinality of sets $S(M,N)$.

\subsection{Some general lemmas}

Our bounds on $|S(M, N)|$ are based on the simple idea that if the minimum distance between distinct elements of a set of integers $A$ is at least $d$, then
\begin{equation}\label{eqn2.11}
  |A \cap (M, N]| \le d^{-1}(N-M) + 1.
\end{equation}
In Sections \ref{sec:3}--\ref{sec:5}, we prove several results on the spacing between elements of sets 
\[S(M) := S(M,\lambda M),\] 
where $\lambda > 1$ is a constant. Those spacing estimates and inequality \eqref{eqn2.11} yield bounds on $|S(M)|$, which we leverage with the help of the next lemma.

\begin{lemma}\label{lemma-1}
  Suppose that $A_1, A_2, A_3, b_1, b_2$ are positive reals and $u$, $v$, $\lambda$ are real numbers with $0 < u < v < 1 < \lambda$. Assume that for all $M \in [x^u,x^v]$ the estimate
  \[ |S(M)| \leq A_1M^{b_1}+A_2M^{-b_2}+A_3\]
  holds. Then
  \[ |S(x^u,x^v)| \leq A_1'x^{b_1v}+A_2'x^{-b_2u}+A_3'\log x+A_3, \]
  where
  \[A_1'=\frac{A_1}{1-\lambda^{-b_1}},\quad A_2'=\frac{A_2}{1-\lambda^{-b_2}},\quad A_3'=A_3\cdot\frac{v-u}{\log{\lambda}}.\]
\end{lemma}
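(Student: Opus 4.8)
The idea is a standard dyadic-style decomposition of the interval $[x^u, x^v]$, but with ratio $\lambda$ rather than $2$, so that each piece is of the form $(M, \lambda M]$ and the hypothesis applies on it. Set $L = \lfloor \log_\lambda(x^{v-u}) \rfloor = \lfloor (v-u)\log x / \log\lambda \rfloor$, and for $0 \le j \le L$ put $M_j = x^u \lambda^j$, so that $M_0 = x^u$, $M_L \le x^v < M_{L+1}$, and $(x^u, x^v]$ is covered by the intervals $(M_j, M_{j+1}] = (M_j, \lambda M_j]$ for $j = 0, \dots, L$ (the last one truncated at $x^v$). Note $M_j \in [x^u, x^v]$ for each $j$ in this range, so by hypothesis $|S(M_j, \lambda M_j)| \le A_1 M_j^{b_1} + A_2 M_j^{-b_2} + A_3$, and hence summing,
\[
  |S(x^u, x^v)| \le \sum_{j=0}^{L} \left( A_1 M_j^{b_1} + A_2 M_j^{-b_2} + A_3 \right).
\]

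Now I would bound each of the three resulting sums. The constant sum contributes $A_3(L+1) \le A_3 \cdot \frac{(v-u)\log x}{\log\lambda} + A_3 = A_3'\log x + A_3$, using $L \le (v-u)\log x/\log\lambda$. For the first sum, $\sum_{j=0}^{L} M_j^{b_1} = x^{ub_1}\sum_{j=0}^{L} \lambda^{jb_1}$ is a geometric series with ratio $\lambda^{b_1} > 1$, so it is at most $x^{ub_1} \cdot \frac{\lambda^{(L+1)b_1}}{\lambda^{b_1}-1}$; since $M_{L+1} = x^u\lambda^{L+1} \le \lambda x^v$ would be the natural bound, but more carefully $\lambda^{L} \le x^{v-u}$ gives $x^{ub_1}\lambda^{Lb_1} \le x^{vb_1}$, and then factoring, $\sum_{j=0}^L M_j^{b_1} \le x^{vb_1} \sum_{i=0}^{L}\lambda^{-ib_1} \le x^{vb_1} \cdot \frac{1}{1-\lambda^{-b_1}} = A_1^{-1}A_1' x^{b_1 v}$. (Here reindexing by $i = L - j$ turns the increasing geometric series into a decreasing one bounded by its infinite sum.) Symmetrically, for the second sum $\sum_{j=0}^L M_j^{-b_2} = x^{-ub_2}\sum_{j=0}^L \lambda^{-jb_2} \le x^{-ub_2}\cdot\frac{1}{1-\lambda^{-b_2}} = A_2^{-1}A_2' x^{-b_2 u}$. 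Multiplying through by $A_1$, $A_2$ respectively and adding gives exactly the claimed bound.

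The only subtlety worth care is the bookkeeping at the top end of the range: one must check that $x^v \le M_{L+1} = \lambda M_L$ so that the truncated final interval $(M_L, x^v]$ is genuinely contained in $(M_L, \lambda M_L]$, and that $M_L \le x^v$ so that $M_L$ lies in the range $[x^u, x^v]$ where the hypothesis is available. Both follow immediately from the definition $L = \lfloor (v-u)\log x/\log\lambda\rfloor$. There is no real obstacle here; the lemma is essentially a packaging device, and the main point is simply to record the explicit constants $A_1', A_2', A_3'$ that emerge from summing the geometric series and the arithmetic progression of length $L+1$. I would present the three sums in a single short display and state the resulting inequality.
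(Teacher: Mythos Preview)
Your proposal is correct and follows exactly the approach the paper sketches: cover $(x^u,x^v]$ by intervals $(M_j,\lambda M_j]$, apply the hypothesis on each, and sum the resulting geometric progressions and the constant term. The paper's own proof is in fact briefer than yours, merely outlining this strategy and pointing to \cite[Lemma~1]{Fila90} for a detailed version with $\lambda=2$; your explicit bookkeeping (the reindexing $i=L-j$ for the increasing geometric sum, and the endpoint checks for $M_L$ and $M_{L+1}$) fills in precisely the details that the paper omits.
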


\begin{proof}
  This is standard: we cover the interval $(x^u,x^v]$ with intervals of the form $(M, \lambda M]$, apply the hypothesis to each of them, and sum the ensuing geometric progressions. The only (minimal) novelty in the present version is the explicit description of the coefficients $A_j'$ in terms of the $A_j$'s and the various parameters. The reader will find a detailed proof of a variant for $\lambda = 2$ in \cite[Lemma 1]{Fila90}.
\end{proof}

Some of our results also rely on the properties of divided differences. For a function $f: [a,b] \to \mathbb R$ and $s+1$ points $t_0, t_1, \dots, t_s \in [a,b]$, the {\em divided difference (of order $s$)}, $f[t_0, t_1, \dots, t_s]$, of $f$ at the given points is defined recursively: we set $f[t_0] = f(t_0)$ when $s=0$, and
\[ f[t_0, t_1, \dots, t_s] = \frac{f[t_1, \dots,t_s] - f[t_0, \dots, t_{s-1}]}{t_s-t_0} \]
when $s \ge 1$. Divided differences are a tool in numerical analysis that has a long and rich history, but here we are interested only in two of their elementary properties, which we summarize in the next lemma. The reader can find proofs of these properties in many texts on numerical analysis that discuss interpolation theory: e.g., \cite[Ch. 6]{IsKe94}.

\begin{lemma}\label{lem:divideddiff}
  Let $f: [a,b] \to \mathbb R$ be a function, let $t_0 < t_1 < \dots < t_s$ be distinct numbers in $[a,b]$, and let $f[t_0,t_1,\dots, t_s]$ denote the respective divided difference of $f$. Then
  \[ f[t_0,t_1,\dots,t_s] = \sum_{j=0}^s \frac {f(t_j)}{\prod\limits_{i \ne j}(t_j - t_i)}, \]
  where the product is over $i \in \{0,1,\dots,s\} \setminus \{ j \}$.
  Moreover, if $f$ has $s$ continuous derivatives on $[a,b]$, then there is a number $\xi \in (t_0, t_s)$ such that
  \[ f[t_0,t_1,\dots,t_s] = \frac {f^{(s)}(\xi)}{s!}. \]
\end{lemma}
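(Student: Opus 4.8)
The plan is to prove the two assertions separately, each by a short induction or by a standard interpolation argument.

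For the explicit formula I would induct on $s$. The base case $s = 0$ is just the definition $f[t_0] = f(t_0)$. For the inductive step I would start from the recursive definition
\[ f[t_0,\dots,t_s] = \frac{f[t_1,\dots,t_s] - f[t_0,\dots,t_{s-1}]}{t_s - t_0}, \]
substitute the inductive hypothesis for each of the two order-$(s-1)$ divided differences on the right, and collect the coefficient of $f(t_j)$. The endpoint terms $j=0$ and $j=s$ come out immediately. For an interior index $1 \le j \le s-1$ the coefficient is
\[ \frac{1}{t_s - t_0}\left( \frac{1}{\prod_{i \ge 1,\, i \ne j}(t_j - t_i)} - \frac{1}{\prod_{i \le s-1,\, i \ne j}(t_j - t_i)} \right), \]
and putting this over the common denominator $\prod_{1 \le i \le s-1,\, i \ne j}(t_j - t_i)$ and using $(t_j - t_0) - (t_j - t_s) = t_s - t_0$ collapses it to $1/\prod_{i \ne j}(t_j - t_i)$. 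This is entirely routine algebra, so I would present it compactly.

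For the mean value property I would use the interpolation characterization. Let $p$ be the unique polynomial of degree at most $s$ with $p(t_i) = f(t_i)$ for $0 \le i \le s$, and set $g := f - p$. Then $g$ vanishes at the $s+1$ distinct points $t_0 < \dots < t_s$, so by repeated application of Rolle's theorem $g'$ has at least $s$ zeros in $(t_0,t_s)$, $g''$ at least $s-1$, and after $s$ steps $g^{(s)}$ has at least one zero $\xi \in (t_0,t_s)$. Hence $f^{(s)}(\xi) = p^{(s)}(\xi)$, and since $\deg p \le s$ the derivative $p^{(s)}$ is the constant $s!\,a_s$, where $a_s$ is the leading coefficient of $p$. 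Writing $p$ in Newton form $p(t) = \sum_{k=0}^{s} c_k \prod_{i < k}(t - t_i)$, a short induction on $k$ (matching the recursion for divided differences, or comparing with the first part of the lemma) shows $c_k = f[t_0,\dots,t_k]$, so $a_s = c_s = f[t_0,\dots,t_s]$, and we conclude $f^{(s)}(\xi) = s!\, f[t_0,\dots,t_s]$.

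The only mild subtlety here is the bookkeeping that identifies the recursively defined divided difference with the leading coefficient of the interpolating polynomial; everything else is a definition chase or a textbook use of Rolle's theorem. Given how standard all of this is, in the actual write-up I would probably just record the argument in a couple of lines and otherwise defer to \cite[Ch. 6]{IsKe94}.
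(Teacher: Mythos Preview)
Your argument is correct and entirely standard. In fact the paper does not prove this lemma at all: it simply states the two properties and refers the reader to \cite[Ch.~6]{IsKe94}, which is exactly what your closing sentence proposes doing.
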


\section{Basic Spacing Lemmas}
\label{sec:3}

Let $M$ be a large parameter, with $H \le M \le \sqrt{2x}$, and let $\lambda \in (1, 2]$ be a constant. In this section, we prove several lower bounds on the minimum distance between distinct elements of $S(M)$. As we pointed out in the introduction, the computational work in \cite{MOT21} allows us to assume that $x$ is large. Also, while in our proofs we will utilize several different choices for $h$ and $H$, we will always have $h \le H$ and $h \le 2x^{1/3}$. Thus, we assume in the remainder of the paper that
\begin{equation}\label{eqn3.0}
  x \ge e^{41}, \quad 1000 \le h \le 2x^{1/3}.
\end{equation}

\subsection{Spacing for pairs}

First, we show that two distinct elements of $S(M)$ cannot be ``too close'' to one another.

\begin{lemma}\label{lem:2}
  Suppose that $H \le M$. If $u$ and $u+a$ are distinct elements of $S(M)$, then
  \begin{equation}\label{eqn3.1}
    a > 0.4995x^{-1}M^{3}.
  \end{equation}
\end{lemma}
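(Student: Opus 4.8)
The plan is to exploit the defining condition of $S(M)$ directly. If $u$ and $u+a$ both lie in $S(M)$, then both $\{x/u^2\}$ and $\{x/(u+a)^2\}$ lie in the interval $[1-h/M^2, 1)$ (using that $u, u+a > M$, so that $h/u^2, h/(u+a)^2 \le h/M^2$). First I would write $x/u^2 = n + \beta$ and $x/(u+a)^2 = n' + \beta'$ with integers $n, n'$ and fractional parts $\beta, \beta' \in [1-h/M^2, 1)$. Then I would consider the difference
\[
\frac{x}{u^2} - \frac{x}{(u+a)^2} = (n - n') + (\beta - \beta').
\]
The key point is that $|\beta - \beta'| < h/M^2$, which is small, so the left-hand side is within $h/M^2$ of an integer. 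Meanwhile, a direct computation (mean value theorem, or just algebra) gives
\[
\frac{x}{u^2} - \frac{x}{(u+a)^2} = x \cdot \frac{(u+a)^2 - u^2}{u^2(u+a)^2} = x \cdot \frac{a(2u+a)}{u^2(u+a)^2},
\]
which is a positive quantity that I will bound above. Since $u \le M$ is false in general — wait, we have $u > M$ only as a lower bound from $S(M,\lambda M)$, but actually here $M < u \le \lambda M$ — I would use $u, u+a \in (M, \lambda M]$, so $2u + a < 2(u+a) \le 2\lambda M$ and $u^2(u+a)^2 > M^4$, giving an upper bound on the expression of roughly $2\lambda M a \cdot x / M^4 = 2\lambda x a / M^3$. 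More carefully one keeps $u(u+a) > M^2$ and $2u+a < (u+a) + u$, and estimates the ratio $\frac{a(2u+a)}{u^2(u+a)^2}$ tightly.

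The crux is then a \emph{non-vanishing} argument: I claim this difference cannot itself be an integer (indeed cannot be $\ge 1$, and cannot be $0$ since $u \ne u+a$ forces strict monotonicity of $x/t^2$). If $a$ were very small, the difference $x a(2u+a)/(u^2(u+a)^2)$ would be positive but less than $1$, hence $n - n' = 0$ or the difference lies strictly between two consecutive integers; combined with $|\beta - \beta'| < h/M^2 \le 1$ this forces the integer part difference to be $0$ and hence
\[
0 < \frac{x\,a(2u+a)}{u^2(u+a)^2} = \beta - \beta' \quad\text{OR}\quad \text{(the difference is negative, symmetric case)},
\]
so in absolute value the difference is less than $h/M^2$. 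Setting the upper bound for the difference against this and also against the contradiction that it be too small, I would derive that either $a > (\text{const})\, x^{-1} M^3$ directly, or reach an impossibility. Concretely: from $\frac{x a (2u+a)}{u^2(u+a)^2} < \frac{h}{M^2}$ one would get a lower bound of the wrong shape, so instead the argument must be that when $a$ is too small the difference is positive yet too small to be an integer and too small to equal $\beta-\beta'$ of the right sign — the sign bookkeeping is where I expect to need care, and the constant $0.4995$ will emerge from chasing $\lambda \le 2$, $h \le H \le M$, and the cutoff $x \ge e^{41}$ through these inequalities, absorbing lower-order terms like the $a$ inside $2u+a$ and $(u+a)^2$.

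The step I expect to be the main obstacle is making the bound \emph{explicit with the stated constant}: one must be careful that $u$ ranges only in $(M, \lambda M]$ with $\lambda$ up to $2$, so that $u^2(u+a)^2$ can be as small as (just above) $M^4$ but $2u+a$ can be as large as nearly $4M$, and one must confirm that the resulting constant still beats $1/2$ by the small margin claimed — this is presumably where $h \le M$ (so $h/M^2 \le 1/M$, negligible for large $x$) and the precise value $e^{41}$ enter, to push $0.5$ down to $0.4995$. A secondary subtlety is justifying that the integer-part difference $n - n'$ is exactly what the sign of $\frac{x}{u^2} - \frac{x}{(u+a)^2}$ dictates, i.e. ruling out a "wrap-around" where both fractional parts are near $1$ but the difference of the reals is near $1$ rather than near $0$; this is handled precisely because $h/M^2$ is tiny, so $|\beta - \beta'| < h/M^2 < 1$ and the reals $x/u^2, x/(u+a)^2$ cannot straddle an integer unless $a$ is at least of the claimed size.
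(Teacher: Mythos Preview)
Your setup matches the paper's: write $x/u^{2}$ and $x/(u+a)^{2}$ as integer minus a small $\theta_i\in(0,h/M^{2})$, take the difference, and observe that the real difference lies within $h/M^{2}$ of an integer $n$. The final step is also right in spirit: once $|n|\ge 1$, the difference has absolute value at least $1-h/M^{2}\ge 0.999$, and since by the mean value theorem $|f(u)-f(u+a)|=2ax/\xi^{3}<2ax/M^{3}$ (with $\xi>M$), one gets $a>0.4995\,x^{-1}M^{3}$. Note in particular that $\lambda$ plays no role in the constant; $0.4995=0.999/2$ comes from $h/M^{2}\le h/H^{2}\le 1/h\le 0.001$.

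The genuine gap is your ``non-vanishing'' step. You try to argue by contradiction: if $a$ is small then the difference is in $(0,1)$, forcing $n=0$, hence the difference equals $\beta-\beta'$ and so is $<h/M^{2}$. But that conclusion is \emph{not} a contradiction --- it is perfectly consistent with ``$a$ is small''; it just says $a\lesssim hM/(2x)$, which is an even smaller upper bound. You yourself flag that this gives ``a lower bound of the wrong shape,'' and the subsequent sentences do not repair it. What is missing is the one arithmetic fact you never invoke: $a$ is a positive integer (indeed $a\ge 2$, since $u$ and $u+a$ are both odd). The paper uses this to give a \emph{lower} bound on the difference, namely $2ax/\xi^{3}\ge 2x/(\lambda M)^{3}$, and checks that this exceeds $h/M^{2}$ (equivalently $hM<2\lambda^{-3}x$, which follows from $h\le 2x^{1/3}$, $M\le\sqrt{2x}$, and $x\ge e^{41}$). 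That forces $n\ne 0$, and the rest goes through. Without using $a\ge 1$ somewhere, there is no way to rule out $n=0$, so your argument as written does not close.
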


\begin{proof}
  Consider the function $f(u)=xu^{-2}$. If $u, u+a \in S(M)$, we have
  \begin{equation}\label{eq:defntheta}
    f(u)=n_1-\theta_1, \quad f(u+a)=n_2-\theta_2,
  \end{equation}
  with $n_1,n_2 \in \mathbb{Z}$,  $0<\theta_1,\theta_2<hM^{-2}$. So,
  \[ f(u+a)-f(u)=n-\theta, \qquad |\theta|<hM^{-2}. \]
By the mean-value theorem, there exists a number $\xi \in (u,u+a)$ such that
  \[ |f(u+a)-f(u)| = a|f'(\xi)| = \frac {2ax}{\xi^{3}} > \frac {2x}{(\lambda M)^3}.\]
If $n=0$, we have $|f(u+a) - f(u)| = |\theta| < hM^{-2}$, and we deduce that \[ 2\lambda^{-3}x < hM < 3x^{5/6}, \]
  which contradicts \eqref{eqn3.0}. Thus, we have $n \ne 0$, so $|n| \geq 1$.  We also get that
  \[ |\theta| < hM^{-2} \leq hH^{-2} \le H^{-1} \leq 0.001. \]
Hence, $|f(u+a)-f(u)| \geq 1 - |\theta| \geq 0.999$, and we obtain 
  \[ 0.999 \leq |f(u+a)-f(u)| = 2ax\xi^{-3} < 2axM^{-3}, \]
  from which \eqref{eqn3.1} follows.
\end{proof}

Applying \eqref{eqn2.11} to the result of the last lemma, we obtain the following bound on the size of $S(M)$.

\begin{corollary}\label{cor1}
  Under the hypotheses of Lemma \ref{lem:2}, we have
  \[ |S(M)| \le 0.4995^{-1}(\lambda - 1)xM^{-2} + 1. \]
\end{corollary}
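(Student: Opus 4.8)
The plan is to combine Lemma~\ref{lem:2} with the elementary counting inequality~\eqref{eqn2.11}. First I would observe that, by definition, $S(M) = S(M,\lambda M)$ is a set of integers contained in the interval $(M,\lambda M]$, which has length $(\lambda-1)M$. If $|S(M)| \le 1$ there is nothing to prove, since the right-hand side is positive; so I may assume $S(M)$ has at least two elements, in which case Lemma~\ref{lem:2} applies to every pair of distinct elements.

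Next, I would set $d = 0.4995\,x^{-1}M^{3}$. Lemma~\ref{lem:2} tells us that any two distinct elements of $S(M)$ differ by more than $d$; in particular the minimum distance between distinct elements of $S(M)$ is at least $d$, so the hypothesis of~\eqref{eqn2.11} is met with $A = S(M)$, $N = \lambda M$. Applying~\eqref{eqn2.11} then gives
\[
  |S(M)| = \big| S(M) \cap (M,\lambda M] \big| \le d^{-1}(\lambda M - M) + 1 = \frac{(\lambda-1)M}{0.4995\,x^{-1}M^{3}} + 1 = 0.4995^{-1}(\lambda-1)xM^{-2} + 1,
\]
which is exactly the claimed bound.

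There is essentially no obstacle here: the statement is a routine corollary, and the only minor point to be careful about is the degenerate case $|S(M)| \le 1$, where the spacing lemma is vacuous but the inequality still holds trivially. One could also remark that the strict inequality $a > d$ in Lemma~\ref{lem:2} would in fact permit a slightly stronger conclusion (with a floor function on the right), but the stated form is all that is needed in the sequel.
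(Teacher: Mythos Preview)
Your proof is correct and matches the paper's approach exactly: the corollary is obtained by applying inequality~\eqref{eqn2.11} with $d = 0.4995\,x^{-1}M^{3}$ to the spacing bound from Lemma~\ref{lem:2}. The degenerate case $|S(M)|\le 1$ and the strict-inequality remark are harmless observations; the argument is complete as written.
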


\subsection{Spacing for triples}

Next, we consider any three distinct elements $u, u+a, u+b$ of $S(M)$, with $0 < a < b$, and obtain lower bounds on $b$.

\begin{lemma}\label{lemma4}
  Let $\lambda \le 1.2$, $m \ge 1.5$, and suppose that $mh = H \le M$. If $0 < a < b$ and $u,u+a,u+b$ are elements of $S(M)$, then
  \begin{equation}
    b \ge 1.3860x^{-1/3}M^{4/3}. \label{eqn3.11a}
  \end{equation}
\end{lemma}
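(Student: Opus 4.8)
The plan is to carry out the second-order analogue of the argument in Lemma~\ref{lem:2}, with the second divided difference of $f(u)=xu^{-2}$ playing the role of the single difference $f(u+a)-f(u)$. As in the proof of Lemma~\ref{lem:2}, I would write
\[ f(u)=n_1-\theta_1,\quad f(u+a)=n_2-\theta_2,\quad f(u+b)=n_3-\theta_3, \]
with $n_i\in\mathbb Z$ and $0<\theta_i<hM^{-2}$. Expanding $f[u,u+a,u+b]$ by the first identity in Lemma~\ref{lem:divideddiff} and clearing denominators (multiplying through by $ab(b-a)$) gives
\[ ab(b-a)\,f[u,u+a,u+b]=L-\Theta,\qquad L:=n_1(b-a)-n_2b+n_3a,\quad \Theta:=\theta_1(b-a)-\theta_2b+\theta_3a, \]
where $L$ is an integer and $|\Theta|<hM^{-2}b$. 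Here the restriction of $S(M)$ to odd numbers is used decisively: $a$, $b$ and $b-a$ are all even, so $L$ is an \emph{even} integer. On the other hand, the second identity in Lemma~\ref{lem:divideddiff} gives $f[u,u+a,u+b]=\tfrac12 f''(\xi)=3x\xi^{-4}$ for some $\xi\in(u,u+b)$, and since $u,u+b\in(M,\lambda M]$ we have $M<\xi\le\lambda M$.

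Next I would pin down the size of $L$. If $L=0$, then $0<3x\xi^{-4}ab(b-a)=|\Theta|<hM^{-2}b$; bounding $\xi\le\lambda M$ turns this into $a(b-a)<h\lambda^4M^2/(3x)$, and feeding in the pair estimate $b-a>0.4995\,x^{-1}M^3$ from Lemma~\ref{lem:2} (applied to $u+a$ and $u+b$) gives $a<h\lambda^4/(1.4985\,M)$, which is $<2$ once we use $\lambda\le1.2$ together with $h\le M/1.5$ (the latter coming from $mh=H\le M$, $m\ge1.5$). This contradicts $a\ge2$, so $L\ne0$, hence $|L|\ge2$. Combining $|L|\ge2$ with $|\Theta|<hM^{-2}b<2$ and the positivity of $f[u,u+a,u+b]$, the reverse triangle inequality yields
\[ 3x\xi^{-4}ab(b-a)=L-\Theta\ge|L|-|\Theta|\ge 2-hM^{-2}b. \]

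To conclude I would bound the left-hand side from above using $\xi>M$ and the elementary inequality $a(b-a)\le b^2/4$, obtaining
\[ \frac{3xb^3}{4M^4}>2-\frac{hb}{M^2}. \]
Setting $B:=1.3860\,x^{-1/3}M^{4/3}$ and assuming for contradiction that $b<B$, the left-hand side is at most $\tfrac34(1.3860)^3<1.997$, which forces $hB/M^2>0.003$, i.e.\ $h$ exceeds an explicit multiple of $x^{1/3}M^{2/3}$; combined with $h\le M/1.5$ and $M\le\sqrt{2x}$ this bounds $x$ by roughly $10^{15}$, contradicting the standing assumption $x\ge e^{41}$. Hence $b\ge B$, which is \eqref{eqn3.11a}.

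The delicate step is closing the final constant. The ``lossless'' form of the argument yields only $b>(8/3)^{1/3}x^{-1/3}M^{4/3}$, and $(8/3)^{1/3}=1.3867\dots$ sits barely above the claimed $1.3860$, so the error term $hb/M^2$ must be handled with care: the crude bound $hb/M^2<h(\lambda-1)/M\le0.2/1.5$ is far too wasteful to leave that margin. What makes it work is that assuming $b<B$ lets one re-insert the smallness of $b$ into the $hb/M^2$ term and then trade it against $M\le\sqrt{2x}$ and $x\ge e^{41}$; keeping this bookkeeping of constants honest is the real obstacle. Two structural inputs are what make the exponent $4/3$ come with a usable constant at all: the evenness of $L$ (worth a factor $2$ in the main inequality) and bounding $a(b-a)$ by $b^2/4$ rather than by $b^2$ (worth a factor $4$).
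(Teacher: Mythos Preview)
Your proof is correct and follows essentially the same approach as the paper: the second divided difference of $f(u)=xu^{-2}$, the evenness of the integer part $L$, and the inequality $a(b-a)\le b^2/4$. The only cosmetic differences are that the paper splits into cases $b\le0.004M$ and $b>0.004M$ (bounding $|\theta|<0.004h/M\le1/375$ cleanly in the first case and verifying $(0.004M)^3>(8/3)x^{-1}M^4$ directly from $M\le\sqrt{2x}$, $x\ge e^{41}$ in the second), whereas you carry the $hb/M^2$ term through a single contradiction argument; and for the $n=0$ step the paper applies Lemma~\ref{lem:2} to the pair $u,u+a$ together with $b-a\ge2$, while you apply it to $u+a,u+b$ and force $a<2$.
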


\begin{proof}
  Suppose first that $b \le 0.004M$. Write $u_1=u$, $u_2=u+a$, and $u_3=u+b$, and let $n_1, n_2, n_3 \in \mathbb Z$ be such that
  \[ f(u_i)=n_i-\theta_i, \quad 0< \theta_i < hM^{-2} \qquad (i = 1,2,3). \]
  We consider the second divided difference $f[u_1,u_2,u_3]$. By Lemma \ref{lem:divideddiff},
  \begin{align*} f[u_1,u_2,u_3] & = \frac{f(u_1)(u_3-u_2)+f(u_2)(u_1-u_3)+f(u_3)(u_2-u_1)}{(u_2-u_1)(u_3-u_1)(u_3-u_2)}                 \\
                   & =\frac{(n_1-\theta_1)(b-a)-(n_2-\theta_2)b+(n_3-\theta_3)a}{ab(b-a)}\ \eqqcolon \ \frac{n-\theta}{V},
  \end{align*}
  where
  \[n=(b-a)n_1-bn_2+an_3\text{\qquad and \qquad}\theta=(b-a)\theta_1-b\theta_2+a\theta_3. \]
  In particular, since $\theta_i > 0$, we have
  \[ -bhM^{-2} < -b\theta_2 < \theta < (b-a)\theta_1 + a\theta_3 < bhM^{-2}. \]
  Moreover, since $u$, $u+a$ and $u+b$ are all odd (see \eqref{eq:Sdef}) we know that $a$ and $b$ are both even, so $n$ must be as well. 

  We will show that $n \neq 0$. Suppose that $n=0$. Then
  \[ |f[u_1,u_2,u_3]| = \frac {|\theta|}{V} < \frac{bhM^{-2}}{ab(b-a)} = \frac{h}{a(b-a)M^2} < \frac{hx}{0.999M^5}, \]
  after an appeal to \eqref{eqn3.1} and the bound $b-a \ge 2$. However, using Lemma \ref{lem:divideddiff}, we also get that
  \[ |f[u_1,u_2,u_3]| = \frac{|f''(\xi)|}{2!} = \frac{3x}{\xi^4} \geq \frac{3x}{(\lambda M)^4}. \]
  Thus,
  \[ \frac{3x}{(\lambda M)^4} <  \frac{hx}{0.999M^5} < \frac{1.002hx}{HM^4}, \]
  which contradicts the hypotheses of the lemma.

Having proved that $n \neq 0$ and using that it is even, we find that $|n|\geq 2$.
Hence,
  \begin{equation} \label{eqn3.10}
    |f[u_1,u_2,u_3]| = \frac{|n-\theta|}{V} \geq \frac{2-|\theta|}{ab(b-a)} > \frac{1.997}{ab(b-a)},
  \end{equation}
  since
  \[ |\theta|< bhM^{-2}< 0.004hH^{-1} \leq \frac 1{250m} \le \frac 1{375}. \]
  On the other hand, by Lemma \ref{lem:divideddiff},
  \begin{equation} \label{eqn3.11}
    |f[u_1,u_2,u_3]| = \frac{3x}{\xi^4} \leq \frac {3x}{M^4}. 
  \end{equation}
  From \eqref{eqn3.10}, \eqref{eqn3.11}, and the elementary inequality $a(b-a) \le \frac 14b^2$, we deduce that
  \begin{equation} \label{eqn3.12}
    \frac {3b^3x}4 \geq 3ab(b-a)x > 1.997M^4,
  \end{equation}
  and the conclusion of the lemma follows in the case $b \le 0.004M$.

  Finally, when $b > 0.004M$, we have
  \[ b^3 > (0.004M)^3  > \frac 83x^{-1}M^4, \]
  by the assumptions that $M \le \sqrt{2x}$ and $x\geq e^{41}$.
\end{proof}

Note that the expression on the right side of \eqref{eqn3.11a} is a lower bound for the minimum distances between successive elements of the set $S_1(M)$ containing every other element of $S(M)$. Since $|S(M)| \le 2|S_1(M)|$, this observation and \eqref{eqn2.11} yields the following corollary.

\begin{corollary}\label{cor3}
  Under the hypotheses of Lemma \ref{lemma4}, we have
  \[ |S(M)| \le 1.4430(\lambda-1)x^{1/3}M^{-1/3} + 2. \]
\end{corollary}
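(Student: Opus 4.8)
The plan is to mimic the derivation of Corollary~\ref{cor1} from Lemma~\ref{lem:2}, namely to combine the spacing estimate of Lemma~\ref{lemma4} with the elementary counting inequality~\eqref{eqn2.11}, but with one extra bookkeeping step. The complication is that Lemma~\ref{lemma4} controls the spread of \emph{three} elements of $S(M)$ rather than the gap between two consecutive ones, so it does not by itself give a lower bound for the minimum distance within $S(M)$. To get around this I would list the elements of $S(M)$ in increasing order as $u_1 < u_2 < \cdots < u_k$ and pass to the subset $S_1(M) := \{u_1, u_3, u_5, \dots\}$ consisting of every other element. Two successive elements of $S_1(M)$ have the form $u_i < u_{i+2}$, and applying Lemma~\ref{lemma4} to the triple $u_i, u_{i+1}, u_{i+2}$ (with $a = u_{i+1}-u_i$ and $b = u_{i+2}-u_i$, so that $0 < a < b$) gives $u_{i+2}-u_i \ge 1.3860\,x^{-1/3}M^{4/3}$. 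Hence the minimum distance between distinct elements of $S_1(M)$ is at least $d := 1.3860\,x^{-1/3}M^{4/3}$ (this is vacuous, and the claimed bound trivial, if $|S(M)| \le 2$).

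Next I would apply \eqref{eqn2.11} to the set $S_1(M) \subseteq (M,\lambda M]$ with this value of $d$, obtaining
\[ |S_1(M)| \le d^{-1}(\lambda M - M) + 1 = \frac{(\lambda-1)M}{1.3860\,x^{-1/3}M^{4/3}} + 1 = \frac{(\lambda-1)x^{1/3}}{1.3860\,M^{1/3}} + 1. \]
Since $S_1(M)$ contains at least every second element of $S(M)$, we have $|S(M)| \le 2|S_1(M)|$, and doubling the previous display yields
\[ |S(M)| \le \frac{2}{1.3860}(\lambda-1)x^{1/3}M^{-1/3} + 2 \le 1.4430\,(\lambda-1)x^{1/3}M^{-1/3} + 2, \]
which is the assertion of the corollary.

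I do not expect a genuine obstacle: all the analytic content sits inside Lemma~\ref{lemma4}, and what remains is the standard packing argument. The two points that do require a little care are (i) the reduction to $S_1(M)$, which is forced by the fact that a triples estimate cannot control consecutive gaps in $S(M)$ directly, and (ii) the numerical consistency of the final constant, since $2/1.3860 = 1.44300\ldots$ is just barely larger than $1.4430$; this discrepancy is covered by the slack in Lemma~\ref{lemma4}, whose stated constant $1.3860$ is a rounded-down version of the value $(7.988/3)^{1/3} = 1.38602\ldots$ actually produced in \eqref{eqn3.12}, and $2/(7.988/3)^{1/3} < 1.4430$.
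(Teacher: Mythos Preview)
Your proof is correct and matches the paper's own argument essentially verbatim: the paragraph preceding Corollary~\ref{cor3} in the paper does precisely your reduction to $S_1(M)$, applies \eqref{eqn2.11} with $d=1.3860\,x^{-1/3}M^{4/3}$, and then doubles. Your observation about the rounding of the constant (that $2/1.3860>1.4430$ but $2/(7.988/3)^{1/3}<1.4430$, using the unrounded value from \eqref{eqn3.12}) is exactly the right way to resolve the apparent fourth-decimal discrepancy.
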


\section{Spacing for Pairs of Pairs}

In this section, we study a special family of quadruples $u$, $u+a$, $u+b$, $u+a+b$ of elements of $S(M)$. The special form of the spacing between the four numbers allows us to obtain bounds on $b$ that are stronger than those for general quadruples in $S(M)$; in the next section, we will average these bounds over $b$. In the next lemma, we use the third-order divided difference of $f(u) = xu^{-2}$ for the points $u$, $u+a$, $u+b$, and $u+a+b$ to bound $b$ from below.

\begin{lemma}
  Let $\lambda \le 1.05$, $m \ge 5$, and suppose that $mh \le H \le M$. If $0 < a < 2a \le b$ and $u,u+a,u+b,u+a+b$ are elements of $S(M)$, then
  \begin{equation}\label{eqn4.1}
    ab^3 \geq 0.6600x^{-1}M^5.
  \end{equation}
\end{lemma}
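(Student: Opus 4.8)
The plan is to mimic the structure of the proof of Lemma \ref{lemma4}, but now using the third-order divided difference $f[u_1,u_2,u_3,u_4]$ of $f(u)=xu^{-2}$ at the four points $u_1=u$, $u_2=u+a$, $u_3=u+b$, $u_4=u+a+b$. The special "pair of pairs" shape of these points is what makes the bound stronger: the quantity $\prod_{i<j}(u_j-u_i)$ that appears in the denominator of the divided difference factors nicely. Indeed, the six pairwise differences are $a$, $b-a$, $b$, $a$, $b$, $b-a$ (reading off $u_2-u_1$, $u_3-u_2$, $u_3-u_1$, $u_4-u_3$, $u_4-u_2$, $u_4-u_1$), so the Vandermonde-type denominator is $V = a^2 b^2 (b-a)^2$ — a perfect square, with total degree $6$ in $a,b$. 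Since each $u_i\in S(M)$ we write $f(u_i)=n_i-\theta_i$ with $n_i\in\mathbb Z$ and $0<\theta_i<hM^{-2}$; then $f[u_1,u_2,u_3,u_4] = (n-\theta)/V$ where $n$ is an integer combination of the $n_i$ with coefficients that are polynomials in $a,b$, and $\theta$ is the same combination of the $\theta_i$. As in Lemma \ref{lemma4} I would first dispose of the easy range $b > cM$ for a suitable small constant $c$ (there $b^3$, hence $ab^3 \ge 2ab^3/\text{something}$... rather: use $a\ge 2$ and $b>cM$ together with $M\le\sqrt{2x}$, $x\ge e^{41}$ to get \eqref{eqn4.1} directly), and then assume $b$ is small, say $b\le cM$, for the main argument.

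For the main case the two ingredients are, on one hand, the analytic evaluation from Lemma \ref{lem:divideddiff}: there is $\xi\in(u_1,u_4)\subseteq(M,\lambda M)$ with
\[
  f[u_1,u_2,u_3,u_4] = \frac{f'''(\xi)}{3!} = \frac{-24x/\xi^5}{6} = \frac{-4x}{\xi^5},
\]
so $|f[u_1,u_2,u_3,u_4]| = 4x\xi^{-5}$, which lies between $4x(\lambda M)^{-5}$ and $4xM^{-5}$; and on the other hand, the arithmetic lower bound coming from $|n-\theta|$. First I would show $n\neq 0$: assuming $n=0$ forces $|f[u_1,u_2,u_3,u_4]| = |\theta|/V$, and $|\theta|$ is bounded by roughly $(b-a)^2 hM^{-2}$ times a small constant (estimating the polynomial coefficients of the $\theta_i$ using $0<a<b$), while the denominator $V=a^2b^2(b-a)^2$ can be bounded below using $a\ge 2$, $b-a\ge 2$ and the pair-spacing bound \eqref{eqn3.1} $a > 0.4995 x^{-1}M^3$ applied to the pair $u,u+a$ (and also to $u+b,u+a+b$). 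Combining with $|f[u_1,u_2,u_3,u_4]|\ge 4x(\lambda M)^{-5}$ and using $mh\le H\le M$, $\lambda\le 1.05$, $m\ge 5$ gives a numerical contradiction, so $n\neq 0$ and hence $|n|\ge 1$. (One should check whether the oddness/evenness argument from Lemma \ref{lemma4} gives $|n|\ge 2$ here — it may, since $a$ and $b$ are even, making $n$ even — which would only improve the constant; I would use it if it helps reach $0.6600$.) Then $|f[u_1,u_2,u_3,u_4]| = |n-\theta|/V \ge (1-|\theta|)/V$, and $|\theta|$ is small (bounded using $b\le cM$, $mh\le M$, $m\ge 5$), so $|f[u_1,u_2,u_3,u_4]| > (1-\epsilon)/(a^2 b^2 (b-a)^2)$ for a small $\epsilon$. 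Comparing with the upper bound $|f[u_1,u_2,u_3,u_4]|\le 4xM^{-5}$ yields
\[
  4xM^{-5} \ge \frac{1-\epsilon}{a^2 b^2 (b-a)^2}, \quad\text{i.e.}\quad a^2 b^2 (b-a)^2 \ge \frac{(1-\epsilon)M^5}{4x}.
\]
Finally I would convert this into the stated form \eqref{eqn4.1} $ab^3\ge 0.6600 x^{-1}M^5$: using $a\ge 2$ so $a^2\le \tfrac14 a b$ would go the wrong way, so instead note $(b-a)^2 \le b^2$ and $a^2 \le \tfrac{1}{4}(ab)\cdot$... — more carefully, from $a<\tfrac12 b$ we get $b-a > \tfrac12 b$, hence $(b-a)^2 > \tfrac14 b^2$; combined with the displayed inequality this gives $a^2 b^4 \cdot \tfrac14 < a^2 b^2(b-a)^2$... that bounds things the wrong direction too. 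The right move is: we have $a^2 b^2 (b-a)^2 \ge (1-\epsilon)M^5/(4x)$ and want a lower bound on $ab^3$; write $a^2 b^2 (b-a)^2 = (ab^3)\cdot\bigl(a(b-a)^2/b\bigr)$, and since $a < b/2$ and $b - a < b$ we have $a(b-a)^2/b < (b/2)\cdot b^2 / b = b^2/2$ — still not bounded. So the clean path is instead to keep $(b-a)^2 \le b^2$: then $a^2 b^2(b-a)^2 \le a^2 b^4$, giving $a^2 b^4 \ge (1-\epsilon)M^5/(4x)$, and then $ab^3 = \sqrt{a^2 b^4}\cdot \sqrt{b^2}/b = \sqrt{a^2b^4}\cdot$... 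Hmm — the honest statement is that one takes square roots: $ab^2 \ge \tfrac12\sqrt{(1-\epsilon)/x}\,M^{5/2}$, and then multiplies by $b$ using $b\ge 2a\ge 4$ and a lower bound on $b$ (e.g. $b\ge 2a > 0.999 x^{-1}M^3$, which is tiny) — this does not immediately produce $ab^3 \gg x^{-1}M^5$ cleanly, so the genuine derivation must keep all six factors and use $a\ge 2$, $b-a\ge 2$ to trade two of the linear factors for constants: from $a^2b^2(b-a)^2 \ge cM^5/x$ and $(b-a)\ge 2$, $a \ge 2$ we get $4 a b^2 (b-a) \cdot(ab)/(4) \ge \ldots$; the precise bookkeeping that lands on $0.6600$ is exactly the step I expect to be fiddly.

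The main obstacle, then, is not conceptual but this last bookkeeping: extracting $ab^3$ (an expression of degree $4$) as a lower bound from the degree-$6$ quantity $a^2b^2(b-a)^2$ in a way that keeps the constant as large as $0.6600$. One has to decide optimally how to spend the slack — the available facts are $a\ge 2$, $b-a\ge 2$, $b\ge 2a$ (so $b-a\ge b/2\ge a$), and the crude pair bound $a>0.4995x^{-1}M^3$ — and the inequality $(b-a)^2 \ge \max\{4,\, b^2/4,\, a^2\}$ together with $a^2 \ge 2a$ has to be combined just so. I expect the intended route is: $a^2 b^2 (b-a)^2 = (ab^3)\cdot\frac{a(b-a)^2}{b}$ and then bound $\frac{a(b-a)^2}{b}$ from \emph{above} by something like $\frac{b^2}{\text{const}}$ using $a\le b/2$ and... no; rather bound it from \emph{below} is what we'd want, which fails. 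So most likely one instead writes $a^2b^2(b-a)^2 \ge (ab^3)^{4/3}\cdot(\text{stuff})^{-1/3}$ via a weighted AM–GM / Hölder split of the exponents $(2,2,2)$ on $(a,b,b-a)$ against the target exponents $(1,3,0)$ — i.e. $ab^3 = a^{1}b^{3}(b-a)^0$ and one seeks the largest $C$ with $a^2b^2(b-a)^2 \ge C (ab^3)^{?}$. Getting the exponent and then the constant right (using $b-a\ge 2$ to absorb the residual power of $b-a$, and $a\ge2$, $x\ge e^{41}$, $M\le\sqrt{2x}$ for the leftover) is the one genuinely delicate computation; everything else parallels Lemma \ref{lemma4} verbatim.
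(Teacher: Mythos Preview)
Your overall strategy---take the third-order divided difference of $f(u)=xu^{-2}$ at $u,\,u+a,\,u+b,\,u+a+b$, rule out $n=0$, use parity to get $|n|\ge 2$, then compare the arithmetic lower bound for $|f[u_1,\dots,u_4]|$ against the analytic upper bound $4xM^{-5}$---is exactly what the paper does. What derails your argument is the computation of the common denominator $V$, and this is precisely the source of the ``fiddly bookkeeping'' you cannot close at the end.

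The denominator in Lemma~\ref{lem:divideddiff} is \emph{not} the full Vandermonde $\prod_{i<j}(u_j-u_i)$: each summand $f(u_j)$ carries only $\prod_{i\ne j}(u_j-u_i)$, a product of $s=3$ linear factors, so the common denominator has degree $4$, not $6$. (You also miscounted one difference: $u_4-u_1=a+b$, not $b-a$.) For this symmetric configuration the four individual denominators are $\pm ab(a+b)$ and $\pm ab(b-a)$, and the formula collapses to
\[
  f[u_1,u_2,u_3,u_4]=\frac{f(u_4)-f(u_1)}{ab(a+b)}-\frac{f(u_3)-f(u_2)}{ab(b-a)}=\frac{n-\theta}{V},
  \qquad V=ab(a+b)(b-a)=ab(b^2-a^2),
\]
with $n=(b-a)(n_4-n_1)-(a+b)(n_3-n_2)$ even and $|\theta|<2bhM^{-2}$. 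The point is that $V$ already has degree $4$, matching the target $ab^3$: since $b^2-a^2\le b^2$ one has $V\le ab^3$, hence $|f[u_1,\dots,u_4]|\ge (2-|\theta|)/(ab^3)$ immediately, and comparison with $4xM^{-5}$ finishes the lemma. No H\"older split or exponent juggling is needed; the obstacle you diagnosed was entirely an artifact of the wrong $V$.

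Two smaller points where the paper differs from your sketch. First, no separate ``$b>cM$'' case is needed: since $u$ and $u+a+b$ both lie in $(M,\lambda M]$, one has $a+b<(\lambda-1)M\le 0.05M$ automatically, which already makes $|\theta|$ small. Second, for ruling out $n=0$ the paper applies the triple bound \eqref{eqn3.12} to $u,u+a,u+b$, which controls $ab(b-a)$ in one stroke and is tidier than returning to the pair bound \eqref{eqn3.1}.
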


\begin{proof}
  Consider points $u_1=u$, $u_2=u+a$, $u_3=u+b$, and $u_4=u+a+b$ in $S(M)$. Recall that by the definition of the set $S(M)$, there exist integers $n_1, \dots, n_4$ and reals $\theta_1, \dots, \theta_4$ such that
  \begin{equation}\label{eqn4.2}
    f(u_i) = n_i-\theta_i, \quad 0<\theta_i<hM^{-2} \qquad (1 \le i \le 4).
  \end{equation}
  We consider the divided difference $f[u_1, \dots, u_4]$.

  Due to the special configuration of the distances between the four points, the formula in Lemma \ref{lem:divideddiff} simplifies to
  \begin{align*}
    f[u_1,u_2,u_3,u_4] & = \frac{f(u_4)-f(u_1)}{ab(a+b)}-\frac{f(u_3)-f(u_2)}{ab(b-a)} =: \frac {n - \theta}V,
  \end{align*}
  where $V = ab(a+b)(b-a)$ and
  \begin{align*}
    n      & = (b-a)(n_4 - n_1) - (a+b)(n_3 - n_2),                \\
    \theta & =(b-a)(\theta_4-\theta_1) - (a+b)(\theta_3-\theta_2).
  \end{align*}
  We remark that $n$ is an even integer and $|\theta| < 2bhM^{-2}$.

  We will show that $n \neq 0$. Suppose that $n = 0$. Then
  \[ |f[u_1, \dots, u_4]| = \frac {|-\theta|}{V} \leq \frac {2bhM^{-2}} {ab(a+b)(b-a)}. \]
  Recalling \eqref{eqn3.12}, we deduce that
  \[ |f[u_1, \dots, u_4]| < \frac {2hM^{-2}} {ab(b-a)} \leq \frac {6hx} {1.997M^6}. \]
  However, Lemma \ref{lem:divideddiff} gives
  \[ |f[u_1, \dots, u_4]| = \frac{|f^{(3)}(\xi)|}{3!} = \frac {4x}{\xi^5} \geq \frac{4x}{(\lambda M)^5}, \]
  for some $\xi \in (M, \lambda M]$. We combine these upper and lower bounds to get
  \[ \frac{4x}{(\lambda M)^5} < \frac {6hx} {1.997M^6} < \frac {3.005hx}{HM^5}, \]
  which contradicts the assumptions of the lemma.

  Since $n$ is even and nonzero, we can now use that $|n| \ge 2$ combined with the observation $b^2 - a^2 \ge 0.75b^2$ to obtain
  \begin{equation} \label{eqn4.3}
    |f[u_1, \dots, u_4]| = \frac{|n-\theta|}{V} \geq \frac{2-|\theta|}{ab(b^2-a^2)} \geq \frac{1.98}{0.75ab^3},
  \end{equation}
  since
  \[ |\theta|< 2bhM^{-2}< 2(\lambda-1)hH^{-1} \leq \frac 1{10m} \le \frac 1{50}. \]
  On the other hand, by Lemma \ref{lem:divideddiff},
  \begin{equation} \label{eqn4.4}
    |f[u_1, \dots, u_4]| = \frac{4x}{\xi^5} \leq \frac {4x}{M^5}. \end{equation}
  The lemma follows from \eqref{eqn4.3} and \eqref{eqn4.4}.
\end{proof}

Our next result is of a somewhat different nature from the spacing lemmas established hitherto. In this lemma, instead of proving that the distance $b$ between the two pairs exceeds some lower bound in terms of $x, M$, and possibly, $a$, we establish a kind of a dichotomy for $b$: either $b \ge B_1$ for some lower bound $B_1$, or $b \le B_2$, with $B_2$ significantly smaller than $B_1$. 

\begin{lemma}\label{lem:9}
  Let $\lambda \le 1.05$, $m \ge 5$, and suppose that $mh \le H \le M$. If $0 < a < 2a \le b$ and $u,u+a,u+b,u+a+b$ are elements of $S(M)$, then exactly one of the conditions
  \begin{equation}\label{eqn4.5}
    a^3b < \lambda^6 hx^{-1}M^4,
  \end{equation}
  or
  \begin{equation}\label{eqn4.6}
    a^3b > (0.5 - \lambda m^{-1})x^{-1}M^5,
  \end{equation}
  must hold.
\end{lemma}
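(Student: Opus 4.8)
The plan is to work again with the third-order divided difference
\[
f[u_1,u_2,u_3,u_4] = \frac{n-\theta}{V}, \qquad V = ab(a+b)(b-a),
\]
using exactly the notation and setup of the preceding lemma, where $n$ is an even integer, $|\theta| < 2bhM^{-2}$, and $|f[u_1,\dots,u_4]| = 4x\xi^{-5}$ for some $\xi \in (M,\lambda M]$. The dichotomy should come from examining the two possibilities for $n$. First I would dispose of the case $n \neq 0$: since $n$ is a nonzero even integer, $|n|\ge 2$, and as in \eqref{eqn4.3} the bound $|\theta|\le \tfrac1{50}$ gives $|f[u_1,\dots,u_4]| \ge 1.98/(0.75\,ab^3)$ after using $b^2-a^2\ge 0.75b^2$; combining with $|f[u_1,\dots,u_4]| = 4x\xi^{-5} \le 4x/M^5$ yields a lower bound of the shape $ab^3 \gg x^{-1}M^5$, and one checks the constant works out to at least $(0.5 - \lambda m^{-1})x^{-1}M^5$. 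Since $a < b$ we would then have $a^3 b \le a b^3$... — wait, that inequality goes the wrong way, so instead I would track the quantity $a^3b$ directly: from $|n-\theta|/V \le 4x/M^5$ and $|n|\ge 2$ we get $V \le (2-|\theta|)^{-1}\cdot\tfrac14\cdot M^5 x^{-1}\cdot(\text{const})$, and since $a^3 b \le ab(a+b)(b-a) = V$ when $a$ is small relative to $b$ (precisely, $a^2 \le (a+b)(b-a)$ iff $2a^2 \le b^2$, which holds since $b \ge 2a$), we recover $a^3b \le V$, giving \eqref{eqn4.6} with the constant as stated after bounding $|\theta|$.

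Next I would handle the case $n = 0$: then $|f[u_1,\dots,u_4]| = |\theta|/V \le 2bhM^{-2}/V$, and combining with $|f[u_1,\dots,u_4]| = 4x\xi^{-5} \ge 4x/(\lambda M)^5$ yields $4x/(\lambda M)^5 \le 2bh M^{-2}/V$, i.e. $V \le \lambda^5 bh M^3/(2x)$. Again using $a^3 b \le V$ (valid since $b\ge 2a$), this gives $a^3 b \le \lambda^5 bh M^3/(2x)$, hence $a^3 < \lambda^5 h M^3/(2x)$; multiplying by $b$ and absorbing the factor $\tfrac12 \lambda^5 \le \lambda^6$ (since $\lambda \le 1.05 < 2$) produces exactly \eqref{eqn4.5}. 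Finally I would argue exclusivity: the lower bound in \eqref{eqn4.6} exceeds the upper bound in \eqref{eqn4.5} because $(0.5 - \lambda m^{-1})x^{-1}M^5$ versus $\lambda^6 h x^{-1} M^4$ reduces, after cancelling $x^{-1}M^4$, to comparing $(0.5-\lambda m^{-1})M$ against $\lambda^6 h$; since $M \ge H = mh$ (at least $H \ge mh$) and $m \ge 5$, $\lambda \le 1.05$, we have $(0.5 - \lambda m^{-1})M \ge (0.5 - 1.05/5)\cdot 5h = 1.45h > \lambda^6 h$, so the two ranges are disjoint and exactly one holds.

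The main obstacle I anticipate is bookkeeping the constants so that the clean forms $\lambda^6$ in \eqref{eqn4.5} and $0.5 - \lambda m^{-1}$ in \eqref{eqn4.6} actually emerge — in particular making sure the $\tfrac14$ from $a(b-a)\le\tfrac14 b^2$-type estimates, the factor from $b^2-a^2 \ge 0.75 b^2$, and the $|\theta|$ contributions are distributed correctly between the two cases, and verifying the key inequality $a^3 b \le V = ab(a+b)(b-a)$ holds under the hypothesis $b \ge 2a$ (equivalently $(a+b)(b-a) \ge a^2$, i.e. $b^2 \ge 2a^2$). Everything else is a direct rerun of the divided-difference machinery already deployed in the previous lemma, so no new ideas are needed beyond choosing which inequality to feed into which side of the dichotomy.
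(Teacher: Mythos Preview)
Your approach has a genuine gap in the case $n \neq 0$. The third-order divided difference satisfies $|f[u_1,\dots,u_4]| = |n-\theta|/V$ with $V = ab(b^2-a^2)$, and combining $|n-\theta| \ge 2-|\theta|$ with $|f[u_1,\dots,u_4]| \le 4x/M^5$ yields a \emph{lower} bound on $V$, not an upper bound (your inequality direction is reversed). From $V \ge (2-|\theta|)M^5/(4x)$ and the inequality $a^3b \le V$ (which is correct under $b \ge 2a$) you cannot conclude that $a^3b$ is large; the inequalities point the wrong way. More fundamentally, since $V/(a^3b) = (b/a)^2 - 1 \ge 3$, the divided difference naturally controls $ab^3$ (as in the preceding lemma), and no amount of bookkeeping will extract a lower bound on $a^3b$ from a lower bound on $V$.

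The paper does \emph{not} reuse the divided difference here. Instead it starts from the algebraic identity
\[
\frac{2u+3a}{(u+a)^2} - \frac{2u-a}{u^2} = \frac{a^3}{u^2(u+a)^2},
\]
which places $a^3$ directly in the numerator. Multiplying by $x$ and using $u,u+a \in S(M)$ shows that $a^3x/(u^2(u+a)^2)$ is close to an even integer; subtracting the analogous expression for the pair $u+b,\,u+a+b$ and applying the mean value theorem produces a quantity of size $\approx 4a^3bx/M^5$ that is close to an even integer $n$ with error $|\theta| \le 4\lambda m^{-1}$. The dichotomy then comes from whether $n \le 0$ or $n \ge 2$: the former forces \eqref{eqn4.5}, the latter forces \eqref{eqn4.6}. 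This tailored identity is the missing idea; the divided-difference machinery alone cannot reach \eqref{eqn4.6}.
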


\begin{proof}
  We start from the algebraic identity
  \[ \frac{2u+3a}{(u+a)^2} - \frac{2u-a}{u^2} = \frac {a^3}{u^2(u+a)^2}. \]
  Since $u, u+a \in S(M)$, we can use this identity and \eqref{eqn4.2} to get that
  \begin{gather}\label{eqn4.7}
    \frac{a^3x}{u^2(u+a)^2} = \frac{(2u+3a)x}{(u+a)^2} - \frac{(2u-a)x}{u^2} = n' + \theta'
  \end{gather}
  where $n' = (2u+3a)n_2-(2u-a)n_1$ is an even integer and
  \begin{align*}
    |\theta'| & = |\theta_1(2u-a)-\theta_2(2u+3a)| \leq 2u|\theta_1-\theta_2|+a(\theta_1+3\theta_2) < (2u + 4a)hM^{-2}.
  \end{align*}
  Combining \eqref{eqn4.7} with the analogous identity for the pair $u+b, u+a+b$, we find that
  \begin{equation}\label{eqn4.8}
    \frac{a^3x}{u^2(u+a)^2} - \frac{a^3x}{(u+b)^2(u+a+b)^2} = n + \theta,
  \end{equation}
  where $n \in \mathbb Z$ is even and
  \[ |\theta| < (4u + 2b + 8a) hM^{-2} \le 4(u+a+b)hM^{-2} \le 4\lambda hM^{-1} \le 4\lambda m^{-1}. \]

  Next we observe, by the mean-value theorem, there is a $\xi \in (u,u+b)$ such that
  \begin{align*}
    \frac{a^3x}{u^2(u+a)^2} - \frac{a^3x}{(u+b)^2(u+a+b)^2}
     & = \frac{2a^3bx(2\xi+a)}{\xi^3(\xi+a)^3}.
  \end{align*}
  This expression is bounded above by
  \begin{align}\label{eqn4.9}
    \frac{2a^3bx(2\xi+a)}{\xi^3(\xi+a)^3} < \frac{4a^3bx}{\xi^3(\xi + a)^2} < 4a^3bxM^{-5},
  \end{align}
  and bounded below by
  \begin{align}\label{eqn4.10}
    \frac{2a^3bx(2\xi+a)}{\xi^3(\xi+a)^3} > \frac{4a^3bx}{\xi^2(\xi + a)^3} > 4a^3bx(\lambda M)^{-5}.
  \end{align}
  When $a^3b \le (0.5 - \lambda m^{-1})x^{-1}M^5$, \eqref{eqn4.8}, \eqref{eqn4.9}, and the bound on $|\theta|$ yield
  \[n- 4\lambda m^{-1} < n+\theta < 4a^3bxM^{-5} \leq 2 - 4\lambda m^{-1}, \]
  and hence, $n<2$. On the other hand, if $a^3b \ge \lambda^6 hx^{-1}M^4$, we deduce from \eqref{eqn4.8} and \eqref{eqn4.10} that
  \[ 4\lambda hM^{-1} < 4a^3bx(\lambda M)^{-5}  < n+\theta < n+4\lambda hM^{-1}, \]
  so in this case $n>0$. Since $n$ is an even integer, it can satisfy only one of the conditions $n>0$ and $n<2$; therefore, at least one of \eqref{eqn4.5} or \eqref{eqn4.6} must hold. This completes the proof, since under the hyptheses of the lemma, the lower bound in \eqref{eqn4.6} exceeds the upper bound in \eqref{eqn4.5} at least by a constant factor.
\end{proof}

\section{The Main Bounds on $|S(M)|$}
\label{sec:5}

Let
\begin{equation}\label{eqn5.1}
  A = 1.3860x^{-1/3}M^{4/3}.
\end{equation}
In Section \ref{sec:3}, we proved that $b \ge A$ whenever $u, u+a, u+b$ are distinct elements of $S(M)$. Therefore, if $u_0, u_1, \dots, u_s$ are the elements of $S(M)$, listed in increasing order, the set $S_1(M) = \{u_0,u_2,u_4, \dots \}$ has no gaps $<A$ and satisfies
\begin{equation}\label{eqn5.2}
  |S(M)| \le 2|S_1(M)|.
\end{equation}

In this section, we use \eqref{eqn5.2} and the lemmas in the last section to prove the following result.

\begin{proposition}
  \label{prop1}
  Suppose $h=11x^{1/5}\log x$, let $\lambda = 1.045$ and $x \ge e^{116}$, and suppose that $5.5h \le M \le x^{2/5}$. Then
  \begin{equation}\label{eqn5.3}
    |S(M)| \le h (\sigma_3(M) + \sigma_4(M)),
  \end{equation}
  where
  \begin{equation}\label{eqn5.4}
    \sigma_3(M) = \big( 0.5298x^{1/5} + 0.3400x^{-1/5}M \big)h^{-1} + 0.0308 x^{1/15}M^{-1/3},
  \end{equation}
  and
  \begin{equation}
    \sigma_4(M) = \begin{cases} 1.2105x^{-2/3}M^{7/3} & \text{if } M \le 5x^{1/4}, \\
      1.4182x^{-1/9}M^{1/9} & \text{if } M > 5x^{1/4}.
    \end{cases} \label{eqn5.5}
  \end{equation}
\end{proposition}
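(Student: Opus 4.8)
The plan is to bound $|S(M)|$ by partitioning $S(M)$ according to the local spacing of its elements and feeding each part into the spacing estimates of Sections~\ref{sec:3}--\ref{sec:5}; the ``pairs of pairs'' machinery of Section~\ref{sec:5} is what lets us improve on the crude triples bound of Corollary~\ref{cor3}. Throughout I would set $m=5.5$ and $H=mh$, so that $H\le M$ and the hypotheses $\lambda=1.045\le1.05$, $m\ge5$, $mh\le H\le M$ of Lemma~\ref{lem:9} and of the lemma containing \eqref{eqn4.1} hold, as do those of Lemma~\ref{lemma4}; observe also that the hypotheses of the proposition force \eqref{eqn3.0}. Fix a threshold $D_0$, to be chosen at the end as an explicit power of $x$ and $M$. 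Call a gap between consecutive elements of $S(M)$ \emph{long} if it exceeds $D_0$ and \emph{short} otherwise. Then $|S(M)|$ is at most $1$ plus the number of short gaps plus the number of long gaps, and the sum of the long gaps is at most $(\lambda-1)M$, so there are fewer than $(\lambda-1)MD_0^{-1}$ of them; everything therefore reduces to estimating the number of short gaps.

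A short gap has an even length $a\le D_0$ (evenness by the oddness built into \eqref{eq:Sdef}) and comes from a pair $u,u+a\in S(M)$. Let $P(a)$ be the set of left endpoints of \emph{all} pairs in $S(M)$ with gap exactly $a$, so the number of short gaps is at most $\sum_a |P(a)|$, the sum over even $2\le a\le D_0$. To bound $|P(a)|$, consider $w<w'=w+b$ in $P(a)$: if $b\ge 2a$, then $w,w+a,w+b,w+a+b$ is a quadruple in $S(M)$ of the type studied in Section~\ref{sec:5}, while the endpoints of $P(a)$ whose neighbours lie within $2a$ produce overlapping pairs and are few, being controlled directly by the minimum spacing $0.4995\,x^{-1}M^3$ of Lemma~\ref{lem:2}. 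From \eqref{eqn4.1}, $ab^3\ge 0.6600\,x^{-1}M^5$, so apart from those overlapping cases consecutive endpoints of $P(a)$ are spaced by at least $\max\{2a,(0.66\,x^{-1}M^5 a^{-1})^{1/3}\}$; by \eqref{eqn2.11} this gives $|P(a)|\ll x^{1/3}M^{-2/3}a^{1/3}$ for the small values of $a$ that matter. For large $M$ this is sharpened by the dichotomy of Lemma~\ref{lem:9}, \eqref{eqn4.5}--\eqref{eqn4.6}: the elements of $P(a)$ fall into tight clusters, within which consecutive spacings are $\le \lambda^6 h x^{-1}M^4 a^{-3}$, separated by long jumps of size $\ge (0.5-\lambda m^{-1})x^{-1}M^5 a^{-3}$; the jumps being long keeps the number of clusters small, while each cluster is short because its endpoints $w$ together with the points $w+a$ are many elements of $S(M)$ packed into a short window, again contradicting Lemma~\ref{lem:2}. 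This refinement overtakes the bare estimate from \eqref{eqn4.1} when $M$ is of order $x^{1/4}$, which is the source of the case split at $M=5x^{1/4}$ in \eqref{eqn5.5}.

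Finally I would sum over $a$, using elementary estimates such as $\sum_{a\le D_0,\ a\text{ even}}a^{1/3}\ll D_0^{4/3}$ (and the analogous sums arising in the cluster regime), add the fewer than $(\lambda-1)MD_0^{-1}$ long gaps, and pick $D_0$ to balance the resulting terms --- a different choice in each of the ranges $M\le 5x^{1/4}$ and $M>5x^{1/4}$. Substituting $h=11x^{1/5}\log x$, $\lambda=1.045$, $m=5.5$, $x\ge e^{116}$, and $5.5h\le M\le x^{2/5}$, and absorbing lower-order pieces, should collapse the bound to $|S(M)|\le h(\sigma_3(M)+\sigma_4(M))$, with $\sigma_3(M)$ collecting the $M$-independent, linear-in-$M$, and $M^{-1/3}$ contributions and $\sigma_4(M)$ the $M^{7/3}$ (resp.\ $M^{1/9}$) contribution from the clustering step. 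I expect the main obstacle to be exactly this bookkeeping: carrying every constant explicitly through the optimization of $D_0$ while checking, uniformly for $5.5h\le M\le x^{2/5}$, that all the side conditions ($\lambda\le1.05$, $m\ge5$, $mh\le H\le M$, $0<a<2a\le b$, and the size restrictions \eqref{eqn3.0}) stay satisfied, and that neither the overlapping-pair corrections nor the range endpoints ever dominate.
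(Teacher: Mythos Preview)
Your overall strategy---decompose by gap length and feed each $P(a)$ into the pairs-of-pairs lemmas---is the right one, but there is one structural step missing without which the second case of \eqref{eqn5.5} cannot be obtained.

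The paper does not work with $S(M)$ directly. It first passes to $S_1(M)$, the subset of every other element of $S(M)$, and only then defines $T(M;a)$ (your $P(a)$) for gaps in $S_1(M)$. The point is that Lemma~\ref{lemma4} forces any two consecutive gaps in $S(M)$ to have sum at least $A=1.386\,x^{-1/3}M^{4/3}$, so every gap in $S_1(M)$ is already $\ge A$; hence the sum over $a$ in \eqref{eqn5.11}--\eqref{eqn5.12} runs over $a\ge A$, not $a\ge 2$. This is precisely what drives the case split. The clustering argument (Lemma~\ref{lem:9} combined with \eqref{eqn4.1}, not Lemma~\ref{lem:2}) contributes a term of size $h\,x^{-2/3}M^{7/3}a^{-8/3}$ to the bound for $|T(M;a)|$, and when you sum over $a$ you get
\[
\sum_{\substack{a\ge A\\ a\text{ even}}} a^{-8/3}\ \le\ \frac{\zeta(8/3)}{2^{8/3}}
\quad\text{always, but}\quad
\sum_{\substack{a\ge A\\ a\text{ even}}} a^{-8/3}\ \ll\ A^{-5/3}\ \ll\ x^{5/9}M^{-20/9}
\]
once $A$ is large, i.e.\ once $M>5x^{1/4}$. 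If instead you sum from $a=2$ as in your outline, you only ever get the $M^{7/3}$ bound, and for $M$ near $x^{2/5}$ this yields $\sigma_4(M)\asymp x^{4/15}$, which is unbounded. So the split at $5x^{1/4}$ is not Lemma~\ref{lem:9} ``overtaking'' the bare \eqref{eqn4.1} estimate; both ranges use exactly the same clustering bound, and the split comes solely from whether $A$ is large enough to truncate the $a^{-8/3}$ tail.

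Two smaller corrections. Inside each cluster you should bound the spacing of the $w$'s by \eqref{eqn4.1}, giving $b\ge 0.8706\,a^{-1/3}x^{-1/3}M^{5/3}$; your appeal to Lemma~\ref{lem:2} here gives the much weaker $0.4995\,x^{-1}M^3$ and would not produce the $a^{-8/3}$ exponent. And the paper uses a \emph{single} cutoff $B=0.17\,x^{-1/5}M$ (your $D_0$) in both ranges of $M$, which is why the constants in $\sigma_3(M)$ are the same in the two cases, as the statement requires.
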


\begin{remark} Notice that when $x$ is relatively small, the condition $M \le 5x^{1/4}$ in Proposition \ref{prop1} is  impossible, and so only the second condition will be used in the range of ``small'' values of $x$.
\end{remark}

The proof of this proposition uses the set
\[ T(M;a) = \left\{ u : u, u+a \text{ are consecutive elements of } S_1(M) \right\} \]
to bound $|S_1(M)|$. The starting point is the elementary identity
\begin{equation}\label{eqn5.11}
  |S_1(M)| = 1 + \sum_{a=1}^\infty |T(M;a)| = 1 + \sum_{a \ge A} |T(M;a)|,
\end{equation}
which is a direct consequence of the definition of $T(M;a)$. Further, for any $B \ge A$, we have
\[ \sum_{a \ge B} a|T(M;a)| \le \sum_{a \ge A} a|T(M;a)| \le (\lambda - 1)M + 1, \]
so
\[ \sum_{a \ge B} |T(M;a)| \le (\lambda - 1)MB^{-1} + B^{-1}. \]

Applying this inequality to the right side of \eqref{eqn5.11}, we find, for any parameter $B \ge 2$, that
\begin{equation}\label{eqn5.12}
  |S_1(M)| \le 1.5 + (\lambda - 1)MB^{-1} + \sum_{A \le a < B} |T(M;a)|.
\end{equation}

\subsection{Proof of Proposition \ref{prop1}}
We recall the quantity $A$ defined in \eqref{eqn5.1}, and we select
\begin{equation}
  B = \delta x^{-1/5}M, \quad \delta = 0.17, \label{eqn5.13}
\end{equation}
in the imminent application of \eqref{eqn5.12}. We fix an integer $a$, with $A \le a \le B$. If $u_0, u_1, \dots, u_t$ are the elements of $T(M;a)$, listed in increasing order, the set $T_1(M) = \{ u_0, u_2, u_4, \dots \}$ contains only elements of $T(M;a)$ such that if $u,u+b \in T_1(M)$, then $b \ge 2a$. Clearly, $|T(M;a)| \le 2|T_1(M)|$.

Let $I$ be a subinterval of $(M, \lambda M]$ of length \[|I| = (0.5 -\lambda m^{-1}) a^{-3}x^{-1}M^5,\]  and let $u,u+b$ be two elements of $T_1(M) \cap I$. Since $b \ge 2a$, we can apply Lemma~\ref{lem:9} to show that $b$ must satisfy \eqref{eqn4.5}. Taking $u$ and $u+b$ to be the smallest and largest elements of $T_1(M) \cap I$ respectively, we can use this bound on $b$ to deduce that the set $T_1(M) \cap I$ is contained in an interval of length  $\leq \lambda^6 a^{-3}hx^{-1}M^4$. Furthermore, by \eqref{eqn4.1}, we have that
\[ b \geq 0.8706a^{-1/3}x^{-1/3}M^{5/3}. \]
Combining these two observations we find that 
\begin{equation} 
|T_1(M) \cap I| \leq \frac {\lambda^6 a^{-3}hx^{-1}M^4}{0.8706a^{-1/3}x^{-1/3}M^{5/3}}+1 < 1.4959 a^{-8/3}hx^{-2/3}M^{7/3} + 1. 
\end{equation}
Since we need at most
\begin{equation}  \frac{(\lambda-1)M}{(0.5 -\lambda m^{-1}) a^{-3}x^{-1}M^5}+1 < 0.1452 a^3xM^{-4}+1 \end{equation}
intervals of length $|I|$ to cover $(M,\lambda M]$, we conclude that
\begin{align*}
  |T_1(M)| & \leq \left( 0.1452 a^3xM^{-4}+1 \right) \left( 1.4959 a^{-8/3}hx^{-2/3}M^{7/3} + 1 \right) \notag       \\
          & < 0.2173 a^{1/3}hx^{1/3}M^{-5/3}+ 0.1452 a^3xM^{-4}+1.4959 a^{-8/3}hx^{-2/3}M^{7/3}+1.
\end{align*}
Thus,
\begin{align}
  |T(M;a)| < 0.4346 a^{1/3}hx^{1/3}M^{-5/3}+ 0.2904 a^3xM^{-4} \notag\\
 + 2.9918 a^{-8/3}hx^{-2/3}M^{7/3} + 2. \label{eq:T2Upbd}
\end{align}

Next, we use \eqref{eq:T2Upbd} to bound the right side of \eqref{eqn5.12}.  With our choice of parameters, \eqref{eqn5.12} gives
\begin{equation}
  |S_1(M)| \le 1.5 + 0.045\delta^{-1} x^{1/5} + \sum_{A \le a < B} |T(M;a)|. \label{eqn5.15}
\end{equation}
Thus, we need to sum each of the four terms on the right side of \eqref{eq:T2Upbd} over $a \in [A, B)$. Recalling the inequality
\[ \sum_{k \le K} k^s < \frac{(K+1)^{s+1}}{s+1} \qquad (s > 0), \]
and noting that $B =\delta M x^{-1/5} \ge 5.5h\delta x^{-1/5} > 10.285\log x > 1193$, we find that
\begin{equation}
\sum_{\substack{2 \le a \le B\\ a \text{ even}}} a^s < \frac{(B + 2)^{s+1}}{2(s+1)} < \frac{(1.002B)^{s+1}}{2(s+1)}. \label{eqn5.15a}
\end{equation}
Hence,
\begin{align}
  0.4346 hx^{1/3}M^{-5/3} \sum_{\substack{A \le a < B\\ a \text{ even}}} a^{1/3}   
  &< \frac{0.4346 \cdot (1.002B)^{4/3}}{8/3} hx^{1/3}M^{-5/3} \nonumber \\
  & < 0.0154 hx^{1/15} M^{-1/3},  \label{eqn5.17b}
\end{align}
and
\begin{align}
  0.2904 xM^{-4} \sum_{\substack{A \le a < B                                                                                             \\ a \text{ even}}} a^3 <
  \frac{0.2904 \cdot (1.002B)^4}{8}xM^{-4} < 0.00004 x^{1/5} . \label{eqn5.16}
\end{align}
Combining \eqref{eqn5.13}, \eqref{eq:T2Upbd}, \eqref{eqn5.15}, \eqref{eqn5.17b}, and \eqref{eqn5.16}, we conclude that
\begin{equation}
  |S_1(M)| \le h\sigma_3'(M) + 2.9918 hx^{-2/3}M^{7/3}\sum_{\substack{A \le a < B\\ a \text{ even}}} a^{-8/3}, \label{eqn5.18}
\end{equation}
where
\begin{align}
  \sigma_3'(M) = \big( 0.2649x^{1/5} + 0.17x^{-1/5}M \big)h^{-1} + 0.0154x^{1/15}M^{-1/3}.  \label{eqn5.19}
\end{align}

We estimate the sum on the right side of \eqref{eqn5.18} in different ways, depending on the size of $M$. When $M \le 5x^{1/4}$, we use that
\begin{equation}  \sum_{\substack{A \le a < B\\ a \text{ even}}} a^{-8/3} < \frac{\zeta(8/3)}{2^{8/3}} < 0.2023. \label{eqn5.20}
\end{equation}
On the other hand, when $M > 5x^{1/4}$, we have $A > 1.386 \cdot 5^{4/3} > 11.8501$, so
\begin{equation}
  \sum_{\substack{A \le a < B\\ a \text{ even}}} a^{-8/3} < \frac 3{5\cdot 2^{8/3}} \left(\frac{A}{2}-1\right)^{-5/3} < 0.4083A^{-5/3} < 0.237x^{5/9}M^{-20/9}. \label{eqn5.21}
\end{equation}

The proposition follows from \eqref{eqn5.2} and \eqref{eqn5.18}--\eqref{eqn5.21}.
\qed

\section{Proof of Theorem \ref{thm2free1}}
\label{sec:6}

The proof of the theorem uses different approaches for different values of $x$. As we stated in the introduction, the work in \cite{MOT21} establishes our result (and much more) for $x \le e^{41}$. In Section \ref{sec:6.1}, we focus on large $x$ and show that for $x \ge e^{116}$, Theorem~\ref{thm2free1} follows from Proposition \ref{prop1}. To complete the proof, in Section \ref{sec:6.2}, we prove two asymptotically weaker variants, which are, however, stronger than the theorem for small $x$. Those alternative results establish Theorem \ref{thm2free1} in the intermediate range $e^{41} \le x \le e^{116}$.

\subsection{Large $x$.}
\label{sec:6.1}

Let $x \ge e^{116}$ and set $H = 5.5h$ in \eqref{eqn2.2} and \eqref{eqn2.9}. First, we use Proposition~\ref{prop1} and Lemma \ref{lemma-1} to bound $\left|S\left(H, x^{2/5}\right)\right|$.

Suppose first that $H \leq 5x^{1/4}$, (in this case we can assume $x \ge e^{150}$) we split $S(H, x^{2/5})$ in two pieces to account for the different cases in \eqref{eqn5.5}. When we apply Lemma \ref{lemma-1} to the bound \eqref{eqn5.3} for $M \in [H,5x^{1/4}]$, we find that
\begin{align*}
\left|S\left(H,5x^{1/4}\right)\right|  &<  h\left(  \frac {1.2105 \cdot 5^{7/3}x^{-1/12}}{1-1.045^{-7/3}} + \frac {0.0308x^{1/15}H^{-1/3}}{1-1.045^{-1/3}} \right) + \frac{1.7x^{1/20}}{1-1.045^{-1}} \\
& \quad \ + 0.5298x^{1/5} \left(\frac{\log \big( 5x^{1/4}/H \big)}{\log(1.045)}+1\right) \\
&\mkern-36mu \mkern-18mu < 0.1034h + 0.0001x^{1/5} + 0.5298x^{1/5} \left(\frac{\log x}{20\log(1.045)} - \frac{\log(12.1\log x)}{\log(1.045)}+1\right) \\
& \mkern-36mu \mkern-18mu < 0.1034h + 0.6019 x^{1/5}\log x  - 89.7886x^{1/5} < 0.1582h.
\end{align*}
Similarly, when we apply Lemma \ref{lemma-1} to \eqref{eqn5.3} for $M \in [5x^{1/4}, x^{2/5}]$, we get
\begin{align*}
\left|S\left(5x^{1/4}, x^{2/5}\right)\right| & <  h\left(  \frac {1.4182x^{-1/15}}{1-1.045^{-1/9}} + \frac {0.0308\cdot 5^{-1/3} x^{-1/60}}{1-1.045^{-1/3}} \right) + \frac{{0.34x^{1/5}}}{1-1.045^{-1}} \\
& \quad \ + 0.5298x^{1/5} \left(\frac{3\log x}{20\log(1.045)} -\frac{\log 5}{\log(1.045)}+1\right) \\
& < 0.1148h + 1.8055x^{1/5}\log x -10.9463x^{1/5} < 0.2790h.
\end{align*}
Hence,
\begin{equation}
  \left|S\left(H, x^{2/5}\right)\right| < 0.1582h +0.2790h = 0.4372h. \label{eqn6.1}
\end{equation}

Next, we consider the case $H > 5x^{1/4}$ (which implies that $x \le e^{151}$).  In this case we need only consider the latter case of Proposition \ref{prop1} for $M$ in the full range $(H,x^{2/5}]$.  Applying Lemma \ref{lemma-1} in this situation gives
\begin{align}
\left|S\left(H, x^{2/5}\right)\right| & <  h\left(  \frac {1.4182x^{-1/15}}{1-1.045^{-1/9}} + \frac {0.0308 x^{1/15}H^{-1/3}}{1-1.045^{-1/3}} \right) + \frac{0.34x^{1/5}}{1-1.045^{-1}} \notag \\
& \quad \ + 0.5298x^{1/5} \left(\frac{\log x}{5\log(1.045)} -\frac{\log(60.5\log x)}{\log(1.045)}+1\right) \notag \\
& < 0.2378h + 2.4073x^{1/5}\log x - 98.1700x^{1/5} < 0.3976h, \label{eqn6.2}
\end{align}
on noting that $98.170x^{1/5} > 0.059h$ when $x \le e^{151}$.

To complete the estimation of $\big|S(H, \sqrt{2x})\big|$, we apply Lemma \ref{lemma-1} to the bound in Corollary~\ref{cor1} for $M \in [x^{2/5}, \sqrt{2x}]$. This yields
\begin{equation}
  \big|S(x^{2/5}, \sqrt{2x})\big| <   \frac {0.0901x^{1/5}}{1-1.045^{-2}} + \frac{\log x}{10\log(1.045)} + \frac{0.5\log 2}{\log(1.045)}+1 < 0.0009h. \label{eqn6.3}
\end{equation}

Together, \eqref{eqn6.1}--\eqref{eqn6.3} establish \eqref{eqn2.9} with
\[\sigma_3 = \begin{cases}
    0.4381 &\text{if } H \leq 5x^{1/4}, \\
    0.3985 &\text{if } H > 5x^{1/4},
  \end{cases}\]
for all $x \ge e^{116}$. Taking $J=120$ in \eqref{eq:sigma0}, we have $\sigma_0(h,120) \leq -0.0595$ in the same range.
Furthermore, for all $x \ge e^{116}$, we have $\sigma_2(h, 5.5) < 0.1797$, and for $x \ge e^{150}$, we have $\sigma_2(h, 5.5) < 0.1461$. Thus,
\[ \sigma_0(h,120)+\sigma_1 + \sigma_2(h,5.5) + \sigma_3(h,5.5) < \begin{cases}
    0.9770 &\text{if } H \leq 5x^{1/4}, \\
    0.9710 &\text{if }  H > 5x^{1/4}, \\
  \end{cases}\]
which establishes \eqref{eqn2.10}, and therefore the theorem, for $x \ge e^{116}$.

\subsection{Intermediate $x$.}
\label{sec:6.2}

Suppose that $x \ge e^{41}$. We consider $h = 5x^{1/4}$ and choose $\lambda = 1.025$, $J=19$ and $H = 1.75h$. With these choices, we apply Lemma \ref{lemma-1} to the result of Corollary \ref{cor3} to obtain
\begin{align*}
  \big|S(H, \sqrt{2x})\big| & <   \frac{1.4430 \cdot (0.025)x^{1/3}H^{-1/3}}{1-1.025^{-1/3}} +2\left(\frac{\log\left(\sqrt{2x}\right)-\log H}{\log(1.025)} + 1 \right)                              \\
&< 0.4272h + \frac{\log x}{2\log(1.025)} + \frac {\log 2 - 2\log (8.75/1.025)}{\log(1.025)} < 0.4331h.
\end{align*} 
That is, \eqref{eqn2.10} holds with $\sigma_3(h,1.75) = 0.4331$. Moreover, when $h = 5x^{1/4}$ and $x \ge e^{41}$, we have
\begin{align*}
  \sigma_0(h,19) \le -0.0543, \quad \sigma_2(h,1.75) \le 0.158.
\end{align*}
Thus, when $h = 5x^{1/4}$ and $x \ge e^{41}$, we have
\[ \sigma_0(h,19)+ \sigma_1 + \sigma_2(h,1.75) + \sigma_3(h,1.75) < 0.9891. \]
Together with the computations of \cite{MOT21}, this proves the following result.

\begin{proposition}\label{prop3}
  For any $x \ge 2$, the interval $(x, x + 5x^{1/4}]$ contains a squarefree integer.
\end{proposition}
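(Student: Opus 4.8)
The plan is to prove Proposition~\ref{prop3} by verifying the master inequality \eqref{eqn2.10} for $x \ge e^{41}$ with the choice $h = 5x^{1/4}$, and then invoking the computations of \cite{MOT21} to cover the remaining range $2 \le x \le e^{41}$. To set up the application of the machinery of Section~\ref{sec:2}, I would first check that the standing hypotheses \eqref{eqn3.0} are met: with $h = 5x^{1/4}$ and $x \ge e^{41}$ we have $h \ge 5e^{41/4} > 1000$, and $h = 5x^{1/4} \le 2x^{1/3}$ precisely when $x \ge (5/2)^{12}$, which holds comfortably. Then I would select the parameters $\lambda = 1.025$, $J = 19$, $m = 1.75$ (so $H = 1.75h$), and check the hypotheses of Corollary~\ref{cor3}, namely $\lambda \le 1.2$, $m \ge 1.5$, and $mh = H \le M$; the last holds for every $M$ in the range $[H, \sqrt{2x}]$ by construction.

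The core of the argument is to bound $\big|S(H, \sqrt{2x})\big|$. For this I would apply Lemma~\ref{lemma-1} to the bound $|S(M)| \le 1.4430(\lambda-1)x^{1/3}M^{-1/3} + 2$ from Corollary~\ref{cor3} over the full range $M \in [H, \sqrt{2x}]$, i.e., with $b_1 = 0$ absent, $b_2 = 1/3$, $A_2 = 1.4430(\lambda-1)x^{1/3}$, and $A_3 = 2$. Summing the resulting geometric series and logarithmic count, and using $H = 1.75 \cdot 5 x^{1/4} = 8.75x^{1/4}$, gives
\[
\big|S(H, \sqrt{2x})\big| < \frac{1.4430 \cdot 0.025\, x^{1/3} H^{-1/3}}{1 - 1.025^{-1/3}} + \frac{2\big(\log\sqrt{2x} - \log H\big)}{\log 1.025} + 2.
\]
The first term simplifies to a constant multiple of $x^{1/3} \cdot x^{-1/12} = x^{1/4}$, hence a multiple of $h$; one checks it is $< 0.4272h$. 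The logarithmic terms contribute $\tfrac{1}{2}\log x / \log 1.025$ plus a negative constant, which for $x \ge e^{41}$ is small relative to $h = 5x^{1/4}$ — this is where one uses that $x^{1/4}$ grows faster than $\log x$, so the whole expression is $< 0.4331h$. Thus \eqref{eqn2.9} holds with $\sigma_3(h, 1.75) = 0.4331$.

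It remains to assemble the other $\sigma_j$'s and verify \eqref{eqn2.10}. With $J = 19$ I would compute $\sigma_0(h, 19)$ from \eqref{eq:sigma0}: the product $\prod_{p \le 19}(1 - p^{-2})$ and the partial sum $\sum_{p \le 19} p^{-2}$ are explicit rationals, and $2^{\pi(19)}/h = 2^8/(5x^{1/4})$ is negligible for $x \ge e^{41}$, yielding $\sigma_0(h,19) \le -0.0543$. For $\sigma_2$ I would apply Lemma~\ref{lemRS} as in \eqref{eq:Sig2} with $m = 1.75$, $mh = 8.75x^{1/4}$, and check $\sigma_2(h, 1.75) \le 0.158$ for $x \ge e^{41}$ (this is monotone decreasing in $x$, so it suffices to check the endpoint). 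Combining with $\sigma_1 < 0.4523$ from \eqref{eqn2.4}, the sum is $\sigma_0 + \sigma_1 + \sigma_2 + \sigma_3 < -0.0543 + 0.4523 + 0.158 + 0.4331 = 0.9891 < 1 - 1/h$, the last inequality being trivial since $h$ is large. This establishes \eqref{eqn2.10}, hence $N(x,h) < h - 1$ and the existence of a squarefree integer in $(x, x+5x^{1/4}]$ for all $x \ge e^{41}$; for $2 \le x < e^{41}$ the claim is covered by \cite{MOT21}.

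I expect the main obstacle to be purely bookkeeping rather than conceptual: one must be careful that the ``constant'' contributions (the $+1$'s and $+2$'s in Lemma~\ref{lemma-1} and Corollary~\ref{cor3}, and the $2^{\pi(J)}/h$ term) are genuinely dominated by $h = 5x^{1/4}$ throughout $x \ge e^{41}$, and that the logarithmic term $\tfrac{1}{2}\log x/\log 1.025 \approx 20.26 \log x$ — which is not a priori tiny — stays below the slack of roughly $0.006h = 0.03x^{1/4}$ left in the bound $0.4272h \to 0.4331h$. Checking $0.03 x^{1/4} > 20.26 \log x$ for $x \ge e^{41}$ is the one genuine inequality that needs care; everything else is substitution and monotonicity.
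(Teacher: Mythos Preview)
Your proposal is correct and follows the paper's proof essentially verbatim: same parameter choices $\lambda = 1.025$, $J = 19$, $m = 1.75$, same application of Lemma~\ref{lemma-1} to Corollary~\ref{cor3} over $[H,\sqrt{2x}]$, and the same numerical bounds $0.4272h$, $0.4331h$, $-0.0543$, $0.158$, $0.9891$. The only point worth adding is that the logarithmic term comes with a sizable negative constant (from $\log 2 - 2\log(8.75/1.025)$ divided by $\log 1.025$), which makes the endpoint check at $x = e^{41}$ less delicate than your closing paragraph suggests.
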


Moreover, an identical calculation for $x \geq e^{109}$ with  $h = 3.8x^{1/4}$, $H = 4.5h$, $\lambda = 1.0001$, and $J=100$ yields
\begin{align*}
  &\sigma_0(h,100)+ \sigma_1 + \sigma_2(h,4.5) + \sigma_3(h,4.5) \\
  &< -0.0594+0.4523+0.1571+0.4423 = 0.9924,
\end{align*}
which yields the following alternative.

\begin{proposition}\label{prop4}
  For any $x \geq e^{109}$, the interval $(x, x + 3.8x^{1/4}]$ contains a squarefree integer.
\end{proposition}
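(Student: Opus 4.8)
The plan is to run the argument behind Proposition~\ref{prop3} verbatim, changing only the numerical parameters. Fix $x \ge e^{109}$ and put $h = 3.8x^{1/4}$. By the reduction of Section~\ref{sec:2}, it suffices to choose a cutoff $H = mh$ so that the relevant instance \eqref{eqn2.10} of the master inequality,
\[ \sigma_0(h,J) + \sigma_1 + \sigma_2(h,m) + \sigma_3(h,m) < 1 - h^{-1}, \]
holds, where $\sigma_3(h,m)$ is a bound for $h^{-1}|S(H,\sqrt{2x})|$. I would take $m = 4.5$ (so $H = 17.1x^{1/4}$), $\lambda = 1.0001$, and $J = 100$. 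The standing hypotheses \eqref{eqn3.0} are met, since $e^{109} > e^{41}$ and $1000 \le 3.8x^{1/4} \le 2x^{1/3}$ throughout this range.

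The heart of the matter is the estimate for $|S(H,\sqrt{2x})|$, which I would obtain by inserting the bound of Corollary~\ref{cor3} into Lemma~\ref{lemma-1}. This is permissible: the hypotheses of Lemma~\ref{lemma4}, and hence of Corollary~\ref{cor3}, require $\lambda \le 1.2$ and $m \ge 1.5$, both comfortably satisfied, and the covering argument in Lemma~\ref{lemma-1} applies with the endpoints $H$ and $\sqrt{2x}$ even though $H$ is not literally a power of $x$. This yields
\[ |S(H,\sqrt{2x})| < \frac{1.4430(\lambda-1)x^{1/3}H^{-1/3}}{1-\lambda^{-1/3}} + \frac{2}{\log\lambda}\log\!\Big(\frac{\sqrt{2x}}{H}\Big) + 2. \]
With $\lambda = 1.0001$ one has $1-\lambda^{-1/3} \approx 3.333\times 10^{-5}$, and the first term evaluates to roughly $0.4422\,h$. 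The logarithmic terms are numerically large because $\log\lambda$ is tiny, but they are $O(\log x) = o(x^{1/4}) = o(h)$; a direct check shows they contribute under $10^{-6}h$ already at $x = e^{109}$ and a still smaller fraction of $h$ as $x$ grows. Consequently $|S(H,\sqrt{2x})| < 0.4423\,h$, which establishes \eqref{eqn2.9} with $\sigma_3(h,4.5) = 0.4423$.

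It remains to control the other three quantities. From \eqref{eqn2.4}, $\sigma_1 < 0.4523$. For $\sigma_0$: with $J = 100$ one has $\pi(100) = 25$, so the term $2^{\pi(J)}/h = 2^{25}/(3.8x^{1/4})$ in \eqref{eq:sigma0} is negligible for $x \ge e^{109}$, while $1 - \prod_{p\le 100}(1-p^{-2}) - \sum_{p\le 100}p^{-2} < 0$; a numerical evaluation gives $\sigma_0(h,100) \le -0.0594$. For $\sigma_2$: by \eqref{eq:Sig2}, $\sigma_2(h,4.5) = \frac{4.5}{\log(17.1x^{1/4})}\big(1 + 1.5/\log(17.1x^{1/4})\big)$, and since $\log(17.1x^{1/4}) \ge \log 17.1 + \tfrac{109}{4} > 30$, this is at most $0.1571$. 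Adding everything,
\[ \sigma_0(h,100) + \sigma_1 + \sigma_2(h,4.5) + \sigma_3(h,4.5) < -0.0594 + 0.4523 + 0.1571 + 0.4423 = 0.9924, \]
which lies below $1 - h^{-1}$ since $h \ge 3.8e^{109/4}$ is enormous. This verifies \eqref{eqn2.10}, and hence the proposition.

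The step I expect to be most delicate is pushing the final sum below $1$: it works only because $\sigma_2(h,4.5) \le 0.1571$, and this requires $\log(4.5h) \gtrsim 30$, which is essentially what forces the cutoff up to $e^{109}$. A secondary point is confirming that the leading term $1.4430(\lambda-1)x^{1/3}H^{-1/3}/(1-\lambda^{-1/3})$ truly comes out just below the claimed $0.4423\,h$ once the logarithmic remainder from Lemma~\ref{lemma-1} (numerically large but $o(h)$) is folded in; this hinges on the precise values of $1-1.0001^{-1/3}$ and of $H = 17.1x^{1/4}$.
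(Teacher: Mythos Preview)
Your proof is correct and essentially identical to the paper's own argument: you use the same parameters $m=4.5$, $\lambda=1.0001$, $J=100$, apply Corollary~\ref{cor3} through Lemma~\ref{lemma-1} to bound $|S(H,\sqrt{2x})|$, and arrive at the same four numerical contributions $-0.0594+0.4523+0.1571+0.4423=0.9924$. The paper presents this as a one-line variant of the Proposition~\ref{prop3} calculation, which is exactly what you have spelled out.
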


Since $5x^{1/4} \le 11x^{1/5}\log x$ for $x \le e^{109.7}$, Proposition \ref{prop3} implies Theorem \ref{thm2free1} for $x \le e^{109}$. Finally, since $3.8x^{1/4} \le 11x^{1/5}\log x$ for $x \le e^{116.3}$, Proposition \ref{prop4} establishes Theorem \ref{thm2free1} when $e^{109} \le x \le e^{116}$. This completes the proof of the theorem.

\section{Asymptotic Results and Final Comments}
\label{sec:8}

We conclude by noting a few of the explicit bounds that can be obtained by these methods if one no longer requires the bounds to be admissible for all values of $x \geq 2$, allowing instead results valid for sufficiently large values of $x$. 

Some of the possible results that can be obtained by tweaking the parameters used in the proof of Theorem \ref{thm2free1} are given in the statement of Theorem \ref{thm2free2}. To prove any of those results, we reset the parameters $m, J, \lambda, \delta$ that appear in the proofs of Proposition \ref{prop1} and Theorem \ref{thm2free1} and then update the various constants. (When $x$ is as large as in Theorem \ref{thm2free2}, the inequality $H \le 5x^{1/4}$ always holds, so only the first case in the proof of Theorem \ref{thm2free1} can occur.) To establish the claims of Theorem \ref{thm2free2}, we always select $J = 100$, $\lambda = 1.02$, and  $m = \sqrt{\log x_0}$, where $x_0$ is the lower bound on $x$ in each result; we only vary the choice of $\delta$. For example, when $h = 5x^{1/5}\log x$, $x \ge e^{400}$ (hence, $m = 20$), and $\delta = 0.3$, we have
\[ \sigma_0(h,100) + \sigma_1 + \sigma_2(h,m) + \sigma_3(h,m) < 0.9811. \]
For $h=2x^{1/5}\log x$ and $x \ge e^{1800}$,  the choice $\delta = 0.6$ yields an upper bound of $0.9857$; and for $h=x^{1/5}\log x$ and $x \ge e^{500\,000}$, $\delta = 0.87$ gives a bound of $0.9981$. 

\begin{remark}
Looking back at the proofs of our theorems, one can see that the value of $h$ in our theorems is of the form $h(x) = cx^{1/5}\log x$, with $c$ an upper bound for a rather complicated bounded function $C(x; m, J, \lambda, \delta)$, which is decreasing in the variable $x$. Once $x$ is sufficiently large, the decay in $x$ appears to overwhelm the effect of the other parameters. On the other hand, to claim a specific value of $c$ for all $x \ge x_0$, one generally needs to find acceptable choice of the other parameters to ensure that \eqref{eqn2.10} holds. It seems that if one were to make the function $C(x; m, J, \lambda, \delta)$ fully explicit, one may even be able to identify a four-dimensional neighborhood of the chosen values of $m, J, \lambda, \delta$ such that all the choices of the parameters in that neighborhood are acceptable. 
\end{remark}

\subsection*{Acknowledgments} This work is the result of an REU project that took place on the campus of Towson University during the summer of 2022, with the financial support of the National Science Foundation under grants DMS-2136890 and DMS-2149865. The authors also acknowledge financial support from the Fisher College of Science and Mathematics and TU's Mathematics Department. We also thank M. Filaseta, O. Trifonov, and T. Trudgian for commenting on early drafts of this work and to the anonymous referee for their careful reading of the manuscript and for making an important suggestion regarding the overall organization of the paper.

\bibliographystyle{amsplain}

\end{document}